%
%

\RequirePackage{etoolbox}
\csdef{input@path}{{style/}{graphics/}}
\documentclass[MSNbibl,number,citesort,dvips]{arxbj}
\usepackage{upgreek}
\usepackage{graphicx}


\aid{0}
\volume{21}
\issue{1}
\pubyear{2015}
\firstpage{38}
\lastpage{61}
\doi{10.3150/13-BEJ560} 

\makeatletter
\newcommand{\rrvert}{\vert}
\newcommand{\llvert}{\vert}
\newtheorem{prop}{Proposition}[section]
\newtheorem{cor}[prop]{Corollary}
\newtheorem{theorem}[prop]{Theorem}
\newremark{ex}{Example}
\newremark{rem}[prop]{Remark}
\newproclaim{deff}[prop]{Definition}

\newcommand{\eqref}[1]{(\ref{#1})}

\newcommand{\diag}{\operatorname{diag}}
\newcommand{\R}{\mathbb R}
\newcommand{\E}{\mathbb E}
\newcommand{\N}{\mathbb N}
\renewcommand{\P}{\mathbb P}

\def\sfrac#1#2{#1/#2}
\def\vfrac#1#2{(#1)/#2}
\def\afrac#1#2{#1/(#2)}

\def\sklfrac#1#2{(#1/#2)}
\renewcommand{\epsilon}{\varepsilon}
\makeatother

\begin{document}
\begin{frontmatter}

\title{Maxima of independent, non-identically distributed Gaussian vectors}
\runtitle{Maxima of independent Gaussian vectors}

\begin{aug}
\author[1,2]{\inits{S.}\fnms{Sebastian} \snm{Engelke}\corref{}\thanksref{1,2}\ead[label=e1]{sengelk@uni-goettingen.de}},
\author[3]{\inits{Z.}\fnms{Zakhar} \snm{Kabluchko}\thanksref{3}\ead[label=e2]{zakhar.kabluchko@uni-ulm.de}} \and
\author[4]{\inits{M.}\fnms{Martin}~\snm{Schlather}\thanksref{4}\ead[label=e3]{schlather@math.uni-mannheim.de}}
\address[1]{Institut f\"ur Mathematische Stochastik,
Georg-August-Universit\"at G\"ottingen, Goldschmidtstr. 7, D-37077 G\"
ottingen, Germany.
\printead{e1}}
\address[2]{Facult\'e des Hautes Etudes Commerciales, Universit\'e de
Lausanne, Extranef, UNIL-Dorigny, CH-1015 Lausanne, Switzerland}
\address[3]{Institut f\"ur Stochastik, Universit\"at Ulm,
Helmholtzstr. 18, D-89069 Ulm, Germany.\\
\printead{e2}}
\address[4]{Institut f\"ur Mathematik, Universit\"at Mannheim, A5, 6,
D-68131 Mannheim, Germany.\\
\printead{e3}}
\end{aug}

\received{\smonth{5} \syear{2012}}
\revised{\smonth{8} \syear{2013}}

%
\begin{abstract}
Let $X_{i,n}, n\in\N, 1\leq i \leq n$, be a triangular array of
independent $\R^d$-valued Gaussian
random vectors with correlation matrices $\Sigma_{i,n}$.
We give necessary conditions under which the row-wise maxima converge
to some max-stable distribution
which generalizes the class of H\"usler--Reiss distributions. In the
bivariate case, the conditions
will also be sufficient. Using these results, new models for bivariate
extremes are derived explicitly.
Moreover, we define a new class of stationary, max-stable processes as
max-mixtures of Brown--Resnick
processes. As an application, we show that these processes realize a
large set of extremal correlation
functions, a natural dependence measure for max-stable processes. This
set includes all functions
$\psi(\sqrt{\gamma(h)}),h\in\R^d$, where $\psi$ is a completely
monotone function and $\gamma$ is an arbitrary variogram.
\end{abstract}

%
\begin{keyword}
\kwd{extremal correlation function}
\kwd{Gaussian random vectors}
\kwd{H\"usler--Reiss distributions}
\kwd{max-limit theorems}
\kwd{max-stable distributions}
\kwd{triangular arrays}
\end{keyword}
\pdfkeywords{extremal correlation function, Gaussian random vectors, Husler-Reiss distributions, max-limit theorems, max-stable distributions, triangular arrays}
\end{frontmatter}

\section{Introduction}

It is well known that the standard normal distribution $\Phi$ is in
the max-domain of attraction of the Gumbel distribution, that is,
\begin{eqnarray*}
\lim_{n\to\infty} \Phi(b_n + x / b_n)^n
= \exp\bigl(-\exp(-x)\bigr), \qquad \mbox {for all } x\in\R,
\end{eqnarray*}
where $b_n, n\in\N$, is a sequence of normalizing constants defined
by $b_n=n\phi(b_n)$, where $\phi$ is the standard normal density.
By Theorem~1.5.3 in Leadbetter \textit{et al.} \cite{lea1983}, it is given as
%
\begin{eqnarray}
\label{bn} b_n := \sqrt{2\log n} - \frac{(1/2)\log\log n + \log(2\sqrt{\uppi
})}{\sqrt{2\log n}} + \mathrm{o} \bigl((
\log n)^{-1/2} \bigr).
\end{eqnarray}
Sibuya \cite{sib1960} showed that the maxima of i.i.d.
bivariate normal random vectors with correlation $\rho<1$
asymptotically always become independent.
However, for triangular arrays with i.i.d. entries within each row
where the correlation in the $n$th row approaches
$1$, as $n\to\infty$, with an appropriate speed, H{\"u}sler and Reiss \cite{hue1989} proved that the row-wise maxima converge to a
new class of max-stable bivariate distributions, namely
%
\begin{eqnarray}
\label{hr_distr} F_\lambda(x,y) = \exp \biggl[ -\Phi \biggl(\lambda+
\frac
{x-y}{2\lambda} \biggr)\mathrm{e}^{-y} -\Phi \biggl(\lambda+
\frac
{y-x}{2\lambda} \biggr)\mathrm{e}^{-x} \biggr], \qquad x,y\in\R.
\end{eqnarray}
Here, $\lambda\in[0,\infty]$ parameterizes the dependence in the
limit, $0$ and $\infty$ corresponding to complete dependence and
asymptotic independence, respectively. In fact, Kabluchko \textit{et al.}
\cite{kab2009} provide a simple argument that these are also the only
possible limit points for such triangular arrays.

More generally, H{\"u}sler and Reiss \cite{hue1989} consider
triangular arrays with i.i.d. entries of $d$-variate zero-mean,
unit-variance normal random vectors with correlation matrix $\Sigma_n$
in the $n$th row satisfying
%
\begin{eqnarray}
\label{sigma_cond} \lim_{n\to\infty} \log n \bigl(\mathbf{11}^\top-
\Sigma_n\bigr) = \Lambda \in[0,\infty)^{d\times d},
\end{eqnarray}
where $\mathbf{1} = (1,\dots,1)^\top\in\R^d$ and$\phantom{x}^\top
$ denotes the transpose sign.
Under this assumption, the row-wise maxima converge to the $d$-variate,
max-stable H\"usler--Reiss
distribution whose dependence structure is fully characterized by the
matrix $\Lambda$.
Note that condition \eqref{sigma_cond} implies that all off-diagonal
entries of $\Sigma_n$ converge to $1$ as $n\to\infty$.
A slightly more general representation is given in Kabluchko \cite{kab2011} in terms of Poisson point processes and
negative definite kernels.

In fact, it turns out that these distributions not only attract
Gaussian arrays but also classes of related distributions.
For instance, Hashorva \cite{has2005} shows, that the
convergence of maxima holds for triangular arrays of general bivariate
elliptical distributions, if the random radius is in the domain of
attraction of the Gumbel distribution.
The generalization to multivariate elliptical distributions can be
found in Hashorva \cite{has2006}. Moreover,
Hashorva \textit{et al.} \cite{has2012a} prove that also some
non-elliptical distributions are in the domain of attraction of
the H\"usler--Reiss distribution, for instance multivariate $\chi
^2$-distributions.

Apart from being one of the few known parametric families of
multivariate extreme value distributions,
the H\"usler--Reiss distributions play a prominent role in modeling
spatial extremes since they are the
finite-dimensional distributions of Brown--Resnick processes  \cite{bro1977,kab2009}.

Recently, Hashorva and Weng \cite{has2013} analyzed maxima of
stationary Gaussian triangular arrays where the variables in each row
are identically distributed but not necessarily independent. They show
that weak dependence is asymptotically negligible, whereas stronger
dependence may influence the max-limit distribution.

In this paper, we consider independent triangular arrays $\mathbf
{X}_{i,n} = (X^{(1)}_{i,n},\dots,X^{(d)}_{i,n})$, $n\in\N$ and
$1\leq i\leq n$, where $\mathbf{X}_{i,n}$ is a zero-mean,
unit-variance Gaussian random vector with correlation matrix $\Sigma
_{i,n}$. Thus, in each row the random variables are independent, but
may have different dependence structures. Letting $\mathbf{M}_n =
(M_n^{(1)},\dots,M_n^{(d)})$ denote the vector consisting of the
componentwise maxima
$M_n^{(j)} = \max_{i=1,\dots,n} X^{(j)}_{i,n}$, $j\in\{1,\dots,d\}
$, we are interested in the convergence of the rescaled, row-wise maximum
%
\begin{eqnarray}
\label{row_max} b_n(\mathbf{M}_n - b_n),
\end{eqnarray}
as $n\to\infty$, and the respective limit distributions.

In Section~\ref{sec_bivariate}, we start with bivariate triangular
arrays. For this purpose, we introduce a sequence of counting measures
which capture the dependence structure in each row and which is used to
state necessary and sufficient conditions for the convergence of \eqref
{row_max}. Moreover, the limits turn out to be new max-stable
distributions that generalize \eqref{hr_distr}. The results on
triangular arrays are used to completely characterize the max-limits of
independent, but not necessarily identically distributed sequences of
bivariate Gaussian vectors. Explicit examples for the bivariate limit
distributions are given at the end of Section~\ref{sec_bivariate}. The
multivariate case is treated in Section~\ref{sec_multivariate}, giving
rise to a class of $d$-dimensional max-stable distributions. In
Section~\ref{sec_processes}, we show how these distributions arise as
finite-dimensional margins of the new class of max-mixtures of
Brown--Resnick processes. Furthermore, it is shown that these processes
offer a large variety of extremal correlation functions which makes
them interesting for modeling dependencies in spatial extremes.
Finally, Section~\ref{sec_proofs} comprises the proofs of the main theorems.

\section{The bivariate case}
\label{sec_bivariate}

Before we start with bivariate triangular arrays, let us note that even
the case of univariate sequences of
independent yet non-identically distributed Gaussian random variables
is not trivial.
In fact, many different distributions for the max-limits may arise,
which are not necessarily max-stable (see Example~\ref{ex_univariate} below).
In the sequel, we will therefore restrict to the case that the
variances of the univariate margins
are close to some constant, which can be assumed to be $1$ without loss
of generality,
and we fix the normalization in \eqref{row_max}. Later, for the sake
of simplicity, we
will always consider margins with unit variance.

In order to state the main results in the bivariate case, we need
probability measures on the extended positive half-line $[0,\infty]$.
Endowed with the metric $d(x,y)=|\mathrm{e}^{-x}-\mathrm{e}^{-y}|$, the space $[0,\infty
]$ becomes compact. A function $g\dvtx [0,\infty]\to\R$ is continuous iff
it is continuous in the usual topology on $[0,\infty)$ and the limit
$\lim_{x\to\infty} g(x)$ exists and equals $g(\infty)$.

\subsection{Limit theorems}

Consider a triangular array of independent bivariate Gaussian random
vectors $\mathbf{X}_{i,n} = (X^{(1)}_{i,n},\allowbreak X^{(2)}_{i,n})$, $n\in\N$
and $1\leq i\leq n$, with zero expectation and covariance matrix
\begin{eqnarray*}
\operatorname{Cov} (\mathbf{X}_{i,n}) = \pmatrix{ \sigma^2_{i,n,1}
& \sigma_{i,n,1,2}
\cr
\sigma_{i,n,1,2} & \sigma^2_{i,n,2}
},
\end{eqnarray*}
with $\sigma^2_{i,n,j} > 0$ for all $n\in\N$, $1\leq i\leq n$ and
$j\in\{1,2\}$. Further, denote by $\rho_{i,n} = \sigma_{i,n,1,2} /\allowbreak
(\sigma_{i,n,1}\sigma_{i,n,2})$ the correlation of $\mathbf
{X}_{i,n}$. For $n\in\N$, we define a probability measure $\eta_n$
on $[0,\infty]\times\mathbb{R}^2$ by
%
\begin{eqnarray}
\label{def_eta_n} \eta_n = \frac{1}n \sum
_{i=1}^n \delta_{ (\sqrt{b_n^2(1-\rho
_{i,n})/2}, b_n^2(1-1/\sigma_{i,n,1}), b_n^2(1-1/\sigma
_{i,n,2}) )}
\end{eqnarray}
which encodes the suitably normalized variances and correlations in the
$n$th row. More precisely, it
maps the rate with which the variances and correlations converge to
$1$. Here, for any measurable space $(S,\mathcal{S})$ and $a\in S$,
$\delta_a$ denotes the Dirac measure on the point $a$.

In this general situation, the next theorem gives a sufficient
condition in terms of $\eta_n$ for the convergence of row-wise maxima
of this triangular array.

\begin{theorem}
\label{thm1}
For $n\in\N$ and $1\leq i\leq n$, let $\mathbf{X}_{i,n} $ and $\eta
_n$ be defined as above. Further suppose that for some $\epsilon>0$
the measures $(\eta_n)_{n\in\N}$ satisfy the integrability condition
%
\begin{eqnarray}
\label{unif_int} \sup_{n\in\N} \int_{[0,\infty]\times\R^2}
\bigl[\mathrm{e}^{\theta
(1+\epsilon)} + \mathrm{e}^{\gamma(1+\epsilon)} \bigr] \eta_n\bigl(\mathrm{d}(\lambda
,\theta,\gamma)\bigr) < \infty.
\end{eqnarray}
If for $n\to\infty$, $\eta_n$ converges weakly to some probability
measure $\eta$ on $[0,\infty]\times\R^2$, that is, $\eta_n
\Rightarrow\eta$, then
%
\begin{eqnarray}
\label{normed_max} \max_{i=1,\dots,n} b_n(\mathbf{X}_{i,n}
- b_n)
\end{eqnarray}
converges in distribution to a random vector with distribution function
$F_\eta$ given by
%
\begin{eqnarray}
\label{distr_gen}-\log F_\eta(x,y) &=& \int_{[0,\infty]\times\mathbb
{R}^2}
\Phi \biggl(\lambda+ \frac{y-x + \theta- \gamma}{2\lambda
} \biggr)\mathrm{e}^{-(x-\theta)}\nonumber
\\[-8pt]\\[-8pt]
&&\hphantom{\int_{[0,\infty]\times\mathbb
{R}^2}}{}
+ \Phi \biggl(\lambda- \frac{y-x + \theta- \gamma
}{2\lambda} \biggr)\mathrm{e}^{-(y-\gamma)}
\eta\bigl(\mathrm{d}(\lambda, \theta, \gamma)\bigr),\nonumber
\end{eqnarray}
for $x,y\in\R$.
\end{theorem}

\begin{rem}
Condition \eqref{unif_int} implies
\begin{eqnarray*}
\sup_{n\in\N, 1\leq i \leq n} \frac{1}n \bigl(\mathrm{e}^{b_n^2(1-1/\sigma
_{i,n,1})(1+\epsilon)} +
\mathrm{e}^{b_n^2(1-1/\sigma_{i,n,2})(1+\epsilon
)} \bigr) < \infty.
\end{eqnarray*}
Since $b_n^2 \sim2\log n$ for $n$ large, it follows that the variances
of both components
are uniformly bounded. Thus, the single random variables in each row
satisfy the uniform asymptotical negligibility condition (see, for
instance, \cite{bal1977})
%
\begin{eqnarray}
\label{asymp_neg} \max_{i=1,\dots, n} \P\bigl(b_n
\bigl(X_{i,n}^{(j)} - b_n\bigr) > x\bigr) \to0,\qquad  n\to
\infty,
\end{eqnarray}
for $j=1,2$ and any $x\in\R$.
\end{rem}

\begin{rem}
In fact, one can extend the distribution $F_\eta$ to mixture measures
$\eta$ taking infinite mass at negative infinity. The only condition
which needs to be satisfied is
\[
\int_{[0,\infty]\times\R^2} \bigl[\mathrm{e}^{\theta} + \mathrm{e}^{\gamma} \bigr]
\eta\bigl(\mathrm{d}(\lambda,\theta,\gamma)\bigr) < \infty.
\]
\end{rem}

\begin{rem}
Random variables with variances bounded away from $1$ from above do not
influence the maximum in the limit of \eqref{normed_max}. This can
easily be seen by allowing
weak convergence of $\eta_n$ to $\eta$ on the extended space
$[0,\infty]\times[-\infty,\infty)^2$.
\end{rem}

Note that the one-dimensional marginals of $F_\eta$ are Gumbel
distributed with certain location parameters, for instance,
\begin{eqnarray*}
-\log F_\eta(x,\infty) = \exp \biggl[ -x + \log\int
_{[0,\infty
]\times\mathbb{R}^2} \mathrm{e}^\theta \eta\bigl(\mathrm{d}(\lambda, \theta, \gamma)
\bigr) \biggr].
\end{eqnarray*}
Moreover, $F_\eta$ is a max-stable distribution since
\[
F_\eta^n(x + \log n, y + \log n) = F_\eta(x,y),
\]
for all $n\in\N$. This is a remarkable fact, since, in general,
limits of row-wise maxima of triangular arrays are not max-stable, not
even if the random variables in each row are identically distributed.

The idea of constructing extreme value distributions as in \eqref
{distr_gen} is not new. Indeed, it is well known
that any mixture of spectral measures is again a spectral measure. In
our case, however, these mixture distributions also
arise naturally as the max-limits of independent Gaussian triangular
arrays.

If we assume that the margins have variance $1$, that is, $\sigma
_{i,n,1}=\sigma_{i,n,2}=1$, we can obtain a necessary and sufficient
condition for the convergence of maxima. We denote by $\mathcal
{M}_1([0,\infty])$ the space of all probability measures on $[0,\infty
]$ endowed with the topology of weak convergence. By Helly's theorem,
this space is compact.

\begin{theorem}
\label{thm2}
Consider a triangular array of independent bivariate Gaussian random
vectors $\mathbf{X}_{i,n} = (X^{(1)}_{i,n},X^{(2)}_{i,n})$, $n\in\N$
and $1\leq i\leq n$, where $X^{(1)}_{i,n}$ and $X^{(2)}_{i,n}$ are
standard normal random variables. Denote by $\rho_{i,n}$ the
correlation of $\mathbf{X}_{i,n}$. Let
%
\begin{eqnarray}
\label{emp_meas}\nu_n = \frac{1}n \sum
_{i=1}^n \delta_{\sqrt
{b_n^2(1-\rho_{i,n})/2}}
\end{eqnarray}
be a probability measure on $[0,\infty]$. For $n\to\infty$,
%
\begin{eqnarray}
\label{nec_cond} \max_{i=1,\dots,n} b_n(
\mathbf{X}_{i,n} - b_n)
\end{eqnarray}
converges in distribution if and only if $\nu_n$ converges weakly to
some probability measure $\nu$ on $[0,\infty]$, that is, $\nu_n
\Rightarrow\nu$. In this case, the limit of \eqref{nec_cond} has
distribution function $F_\nu$ given by
%
\begin{eqnarray}
\label{distribution2} -\log F_\nu(x,y) = \int_0^\infty
\biggl[\Phi \biggl(\lambda+ \frac
{y-x}{2\lambda} \biggr)\mathrm{e}^{-x} +
\Phi \biggl(\lambda+ \frac
{x-y}{2\lambda} \biggr)\mathrm{e}^{-y} \biggr] \nu(\mathrm{d}
\lambda) ,
\end{eqnarray}
$x,y\in\R$. The distribution in \eqref{distribution2} uniquely
determines the measure $\nu$, that is, for two probability measures
$\nu, \tilde{\nu} \in\mathcal{M}_1([0,\infty])$ with $\nu\neq
\tilde{\nu}$ it follows that $F_\nu\neq F_{\tilde{\nu}}$.
Furthermore, $F_\nu$ depends continuously on $\nu$, in the sense that
if $\nu_n \Rightarrow\nu$, as $n\to\infty$, and $\nu_n, \nu\in
\mathcal{M}_1([0,\infty])$, then $F_{\nu_n}$ converges pointwise to
$F_\nu$.
\end{theorem}

\begin{rem}
\label{rem2}
If $\nu$ is a probability measure on $[0,\infty)$, an alternative
construction of the distribution $F_\nu$ is the following \cite{kab2011}, Section~3: Let $\sum_{i=1}^\infty\delta
_{U_i}$ be a Poisson point process on $\R$ with intensity $\mathrm{e}^{-u}\,\mathrm{d}u$
and suppose that $B$ has the normal distribution $N(-2S^2,4S^2)$ with
random mean and variance, where $S$ is $\nu$-distributed. Then, for a
sequence $(B_i)_{i\in\N}$ of i.i.d. copies of $B$, the bivariate
random vector $\max_{i\in\N} (U_i, U_i + B_i)$ has distribution
$F_\nu$.
\end{rem}

\begin{ex}
\label{rem1}
For an arbitrary probability measure $\nu\in\mathcal{M}_1([0,\infty
])$, let $(R_i)_{i\in\N}$ be a sequence of i.i.d. samples of $\nu$.
Putting $\rho_{i,n} = \max(1 - 2R_i^2/b_n^2,-1)$ in Theorem~\ref
{thm2} yields
\begin{eqnarray*}
\nu_n = \frac{1}n \sum_{i=1}^n
\delta_{\min(R_i,b_n)} \Rightarrow \nu, \qquad \mbox{a.s.},
\end{eqnarray*}
by the law of large numbers. Hence, \eqref{nec_cond} converges a.s. in
distribution to $F_\nu$.
\end{ex}

The above theorem can be applied to completely characterize the
distribution of the maxima of a \emph{sequence}
of independent, but not necessarily identically distributed bivariate
Gaussian random vectors with unit variance.

\begin{cor}
\label{cor1}
Suppose that $\mathbf{X}_{i} = (X^{(1)}_{i},X^{(2)}_{i})$, $n\in\N$
and $1\leq i\leq n$, is a sequence of independent bivariate Gaussian
random vectors where $X^{(1)}_{i}$ and $X^{(2)}_{i}$ are standard
normal random variables. Denote by $\rho_{i}$ the correlation of
$\mathbf{X}_{i}$ and let
\begin{eqnarray*}
\nu_n = \frac{1}n \sum_{i=1}^n
\delta_{\sqrt{b_n^2(1-\rho_{i})/2}}
\end{eqnarray*}
be a probability measure on $[0,\infty]$. For $n\to\infty$,
%
\begin{eqnarray}
\label{seq_cond} \max_{i=1,\dots,n} b_n(
\mathbf{X}_{i} - b_n)
\end{eqnarray}
converges in distribution if and only if $\nu_n$ converges weakly to
some probability measure $\nu$ on $[0,\infty]$. In this case, the
limit of \eqref{seq_cond} has distribution function $F_\nu$ as in
\eqref{distribution2}. Furthermore, for all $\nu\in\mathcal
{M}_1([0,\infty])$, $F_{\nu}$ is attained as a limit of \eqref
{seq_cond} for a suitable sequence $(\mathbf{X}_{i})_{i\in\N}$.
\end{cor}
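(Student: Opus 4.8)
The first two assertions---the equivalence and the explicit form \eqref{distribution2} of the limit---require no new work: a single sequence $(\mathbf{X}_i)_{i\in\N}$ may be viewed as the diagonal triangular array $\mathbf{X}_{i,n}:=\mathbf{X}_i$, $1\le i\le n$, whose row margins are standard normal and whose correlations are $\rho_{i,n}:=\rho_i$. The associated measure \eqref{emp_meas} is then exactly the $\nu_n$ in the statement, so Theorem \ref{thm2} applies verbatim and yields both the ``if and only if'' and the representation of the limit by $F_\nu$. Thus only the final attainability claim needs a genuine argument.

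For this I would sequentialise the construction of Remark \ref{rem1}. Fix $\nu\in\mathcal{M}_1([0,\infty])$, let $(R_i)_{i\in\N}$ be i.i.d.\ with law $\nu$, and set $\rho_i:=\max(1-2R_i^2/b_i^2,-1)$, where crucially $b_i$ (rather than the row constant $b_n$) appears, so that the $\rho_i$ do not depend on $n$. Writing $\lambda_{i,n}:=\sqrt{b_n^2(1-\rho_i)/2}$, one has $\lambda_{i,n}=R_i\,b_n/b_i$ on the event $\{R_i\le b_i\}$ and $\lambda_{i,n}=b_n$ otherwise. The goal is to show $\nu_n=\frac1n\sum_{i=1}^n\delta_{\lambda_{i,n}}\Rightarrow\nu$ almost surely; by the part already established this forces \eqref{seq_cond} to converge to $F_\nu$, and picking any realisation in the almost sure event exhibits the required deterministic sequence.

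The heart of the matter is that the positions $\lambda_{i,n}$ still depend on $n$ through the ratio $b_n/b_i$, so this is not a direct law of large numbers; controlling this ratio and the truncation is where I expect the main obstacle to lie. I would use $b_m\sim\sqrt{2\log m}$: for any $\epsilon>0$ the indices $i\in[n^{1-\epsilon},n]$ make up a fraction tending to $1$ of $\{1,\dots,n\}$ and satisfy $1\le b_n/b_i\le(1-\epsilon)^{-1/2}+o(1)$, so $b_n/b_i\to1$ on a set of indices of asymptotic density one. Since any test function $g$ is uniformly continuous on the compact space $([0,\infty],d)$, and in the topology of $[0,\infty]$ multiplying an argument by a factor close to $1$ is a small perturbation uniformly in that argument, one obtains $\frac1n\sum_{i=1}^n g(\lambda_{i,n})-\frac1n\sum_{i=1}^n g(R_i)\to0$ almost surely, the truncated indices $\{R_i>b_i\}$---whose proportion tends to $\nu(\{\infty\})$---being absorbed consistently into the mass at $\infty$. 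The remaining average $\frac1n\sum_{i=1}^n g(R_i)$ converges to $\int g\,\D\nu$ almost surely by the strong law of large numbers, exactly as in Remark \ref{rem1}, giving $\nu_n\Rightarrow\nu$ a.s. A purely deterministic variant, replacing $(R_i)$ by any sequence $(c_i)$ with $\frac1n\sum_{i=1}^n\delta_{c_i}\Rightarrow\nu$ and $c_i\le b_i$, proceeds along the same lines.
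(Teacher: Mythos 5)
Your proposal is correct and follows essentially the same route as the paper: the equivalence and the form of the limit are obtained by viewing the sequence as a triangular array with constant columns and invoking Theorem \ref{thm2}, and attainability is proved via $\rho_i=\max(1-2R_i^2/b_i^2,-1)$ together with the observation that $b_n/b_i\to1$ on index sets of asymptotic density one, plus the law of large numbers. The only (immaterial) difference is that you verify $\nu_n\Rightarrow\nu$ by testing against uniformly continuous functions on the compact space $[0,\infty]$, whereas the paper checks convergence of $\nu_n([0,y])$ at continuity points $y$ of $\nu$.
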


\begin{rem}
It is worthwhile to note that, in general, the class of
max-selfdecomposable distributions
(cf. Mejzler \cite{mej1956}, de Haan and Ferreira \cite{deh2006a}, Theorem~5.6.1), that is, the max-limits of
\emph{sequences} of independent (not necessarily identically
distributed) random variables,
is a proper subclass of max-infinitely-divisible distributions, that
is, the max-limits of
\emph{triangular arrays} with i.i.d. random variables in each row. The
latter coincides with the
class of max-limits of \emph{triangular arrays}, where the rows are
merely independent but not
identically distributed  \cite{bal1977,ger1986}. In the (bivariate) Gaussian
case, the above shows that the max-limits of i.i.d. \emph{triangular
arrays}, namely the H\"usler--Reiss distributions in \eqref{hr_distr},
are a proper subclass of max-limits of independent \emph{triangular
arrays}, namely the distributions in \eqref{distribution2}, which, on
the other hand, coincide with the max-limits of independent \emph{sequences}.
\end{rem}

\begin{ex}\label{ex_univariate}
The following example shows that without any assumptions on the
variances, even the univariate case is not trivial.
Let $X_i$, $i\in\N$, be a sequence standard normal distribution.
Define the sequence of
maxima
\begin{eqnarray*}
M_n = \max_{i=1,\dots,n} X_i/i,\qquad  n\in\N.
\end{eqnarray*}
Clearly, $M_n$ converges almost surely to the non-degenerate
random variable $\max_{i\in\N} X_i/i$, which however is not an extreme
value distribution.
\end{ex}

\subsection{Examples}

In multivariate extreme value theory, it is important to have flexible
and tractable models for dependencies of extremal events.
The max-stable distributions $F_\nu$ in Theorem~\ref{thm2} for $\nu
\in\mathcal{M}_1([0,\infty])$ are max-mixtures of H\"usler--Reiss
distributions with different dependency parameters. They constitute a
large class of new bivariate max-stable distributions. We derive two of
them explicitly by evaluating the integral in \eqref{distribution2}.

\begin{ex}[(Rayleigh distributed $\nu$)]
The Rayleigh distribution has density
%
\begin{eqnarray}
\label{f_rayleigh} f_\sigma(\lambda) = \frac{\lambda}{\sigma^2} \mathrm{e}^{-\afrac{\lambda
^2}{2\sigma^2}},\qquad
\lambda\geq0,
\end{eqnarray}
for $\sigma> 0$. Choosing the dependence parameter $\lambda$
according to the Rayleigh distribution $\nu_\sigma$, we obtain the
bivariate distribution function
%
\begin{eqnarray}
\label{F_rayleigh} -\log F_{\nu_\sigma}(x,y) = \int_0^\infty
\biggl[\Phi \biggl(\lambda + \frac{y-x}{2\lambda} \biggr)\mathrm{e}^{-x} +
\Phi \biggl(\lambda+ \frac
{x-y}{2\lambda} \biggr)\mathrm{e}^{-y} \biggr]
\frac{\lambda}{\sigma^2} \mathrm{e}^{-\afrac{\lambda^2}{2\sigma^2}} \,\mathrm{d}\lambda,
\end{eqnarray}
for $x,y\in\R$. In order to evaluate this integral, we apply partial
integration
and use formulae 3.471.9 and 3.472.3 in Gradshteyn and Ryzhik \cite
{gra2000}. Equation \eqref{F_rayleigh} then simplifies to
%
\begin{eqnarray}
\label{rayleigh_mix}F_{\nu_\sigma}(x,y) = \exp \biggl[-\mathrm{e}^{-\min
(x,y)} -
\frac{1}{\eta} \mathrm{e}^{-\vfrac{y+x}{2}}\mathrm{e}^{-\sfrac{|y-x|\eta
}{2}} \biggr],\qquad  x,y\in\R,
\end{eqnarray}
where $\eta= \sqrt{1 + 1/\sigma^2}\in(1,\infty)$. Note that
$\sigma$ parameterizes the dependence of $F_{\nu_\sigma}$. As
$\sigma$ goes to $0$ (i.e., $\eta$ goes to $\infty$) the margins
become equal. On the other hand, as $\sigma$ goes to $\infty$ (i.e.,
$\eta$ goes to $1$) the margins become completely independent.
The corresponding Pickands' dependence function is given by
\begin{eqnarray*}
A_{\nu_\sigma}(\omega) &=& -\log F_{\nu_\sigma}\bigl(-\log\omega, -\log(1-
\omega)\bigr)
\\
&= &\max(\omega, 1 - \omega) + \frac{1}\eta\sqrt{\omega(1-\omega)} \max
\biggl\{\frac{\omega}{1-\omega},\frac{1-\omega}{\omega
} \biggr\}^{-\eta/2}, \qquad \omega
\in[0,1].
\end{eqnarray*}
\end{ex}

\begin{ex}[(Type-2 Gumbel distributed $\nu$)]
The Type-2 Gumbel distribution has density
\begin{eqnarray*}
f_b(\lambda) = 2b\lambda^{-3}\mathrm{e}^{-\sfrac{b}{\lambda^2}},\qquad
\lambda\geq0,
\end{eqnarray*}
for $b > 0$. With similar arguments as for the Rayleigh distribution
the distribution function $F_{\nu_b}$, where $\nu_b$ has density
$f_b$, is given by
\begin{eqnarray*}
F_{\nu_b}(x,y) = \exp \bigl[ -\mathrm{e}^{-x} -
\mathrm{e}^{-y} + \mathrm{e}^{-\vfrac
{y+x}{2}}\mathrm{e}^{-\sqrt{ (\vfrac{y-x}{2} )^2+ 2b}} \bigr],\qquad  x,y\in\R.
\end{eqnarray*}
In this case, the parameter $b\in(0,\infty)$ interpolates between
complete independence and complete dependence of the bivariate
distribution. In particular, if $b\to0$, then the margins are equal
and, on the other hand, if $b\to\infty$ then the margins are independent.
Here, Pickands' dependence function is
\begin{eqnarray*}
A_{\nu_b}(\omega) = 1 - \sqrt{\omega(1-\omega)} \exp \biggl(- \sqrt{
\biggl(\frac{\log(\omega/(1-\omega))}{2} \biggr)^2+ 2b} \biggr) , \qquad \omega\in[0,1].
\end{eqnarray*}
\end{ex}

Every multivariate max-stable distribution admits a spectral
representation \cite{res2008}, Chapter~5, where
the spectral measure contains all information about the extremal
dependence. Recently, Cooley \textit{et al.} \cite{coo2010} and
Ballani and Schlather \cite{bal2011} constructed new parametric models
for spectral measures. For the bivariate H\"usler--Reiss distribution,
de Haan and Pereira \cite{deh2006} give an explicit form of its
spectral density on the positive sphere $S^1_+=\{(x_1,x_2)\in[0,\infty
)^2, x_1^2 + x_2^2 =1\}$. More precisely, they show that for $\lambda
\in(0,\infty)$
\begin{eqnarray*}
-\log F_\lambda(x,y) = \int_0^{\uppi/2} \max
\bigl\{ \mathrm{e}^{-x}\sin \theta,\mathrm{e}^{-y}\cos\theta \bigr\}
s_\lambda(\theta) \,\mathrm{d}\theta,\qquad  x,y\in\R,
\end{eqnarray*}
and give a rather complicated expression for $s_\lambda$.
Using the equation
\begin{eqnarray*}
\phi \biggl(\lambda- \frac{\log\tan\theta}{2\lambda} \biggr) = \frac{\sin\theta}{\cos\theta}\phi \biggl(
\lambda+ \frac{\log
\tan\theta}{2\lambda} \biggr), \qquad \lambda\in(0,\infty), \theta\in [0,\uppi/2],
\end{eqnarray*}
their expression can be considerably simplified and the spectral
density becomes
\begin{eqnarray*}
s_\lambda(\theta) = \frac{1}{2\lambda\sin\theta\cos
^2\theta} \phi \biggl(\lambda+
\frac{\log(\tan\theta)}{2\lambda
} \biggr), \qquad \theta\in[0,\uppi/2].
\end{eqnarray*}
For the spectral density $s_\nu$ of the H\"usler--Reiss mixture
distribution $F_\nu$ as in \eqref{distribution2}, where $\nu$ does
neither have an atom at $0$ nor at $\infty$, we have the relation
\begin{eqnarray*}
s_\nu(\theta) = \int_0^\infty
s_\lambda(\theta) \nu(\mathrm{d}\lambda), \qquad \theta\in[0,\uppi/2].
\end{eqnarray*}

For the two examples above, we can compute the corresponding spectral densities.
\label{prop_spec}

\setcounter{ex}{2}
%
\begin{ex}[(continued)]
For the Rayleigh distribution, $s_{\nu_\sigma}$ is given by
\begin{eqnarray*}
s_{\nu_\sigma}(\theta) = \frac{\mathrm{e}^{-\sklfrac{1}{\sqrt2}|\log\tan
\theta|\sqrt{1+1/\sigma^2}}}{4\sqrt{\sigma^4+\sigma^2} (\sin
\theta\cos\theta )^{3/2}}, \qquad \theta\in[0,\uppi/2].
\end{eqnarray*}
\end{ex}

\begin{ex}[(continued)]
For the Type-2 Gumbel distribution with parameter $b > 0$, the spectral
density has the form
\begin{eqnarray*}
s_{\nu_b}(\theta) = \frac{\mathrm{e}^{-u_b(\theta)}}{4 (\sin\theta
\cos\theta )^{3/2}} \biggl(1 - \frac{(\log\tan\theta
)^2}{4u_b(\theta)^2}
\biggr) \biggl(1 + \frac{1}{u_b(\theta)} \biggr),\qquad  \theta\in[0,\uppi/2],
\end{eqnarray*}
with $u_b(\theta) = \sqrt{ (\log\tan\theta )^2/4 + 2b}$.
\end{ex}

\begin{figure}

\includegraphics{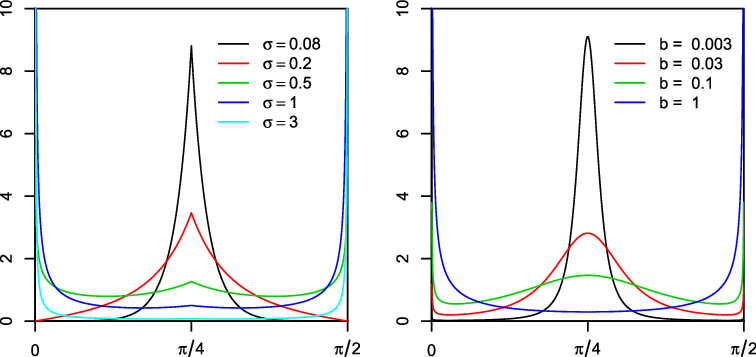}

\caption{Spectral densities of the Rayleigh (left) and Type-2 Gumbel
(right) mixture distribution for different parameters $\sigma$ and
$b$, respectively.}
\label{fig1}
\end{figure}

Figure~\ref{fig1} illustrates how these spectral measures interpolate
between complete independence and complete dependence for different parameters.

\section{The multivariate case}
\label{sec_multivariate}
Similarly as in H{\"u}sler and Reiss \cite{hue1989}, the results for
standard bivariate Gaussian random vectors can be generalized to
$d$-dimensional random vectors. To this end, define a triangular array
of independent $d$-dimensional
Gaussian random vectors $\mathbf{X}_{i,n} = (X^{(1)}_{i,n},\dots
,X^{(d)}_{i,n})$, $n,d\in\N$ and $1\leq i\leq n$,
where $X^{(j)}_{i,n}$, $j\in\{1,\dots,d\}$, are standard normal
random variables. Denote by
$\Sigma_{i,n} =  (\rho_{j,k}(i,n) )_{1\leq j,k\leq d}$ the
correlation matrix of $\mathbf{X}_{i,n}$.
Let $\mathbf{1} = (1,\ldots,1)^\top\in\R^d$ and
%
\begin{eqnarray}
\label{emp_meas_d}\eta_n = \frac{1}n \sum
_{i=1}^n \delta_{\sqrt
{b_n^2(\mathbf{11}^\top-\Sigma_{i,n})/2}}
\end{eqnarray}
be a probability measure on the metric space $[0,\infty)^{d\times d}$,
equipped with the Euclidean distance. Throughout this paper, squares
and square roots of matrices are to be understood component-wise. For a
measure $\tau$ on $[0,\infty)^{d\times d}$, we will denote by $\tau
^2$ the image measure of $\tau$ under the transformation $[0,\infty
)^{d\times d}\to[0,\infty)^{d\times d}$, $\Lambda\mapsto\Lambda
^2$. Further, let $D\subset[0,\infty)^{d\times d}$ be the subspace of
conditionally negative definite matrices which are symmetric and have
zeros on the diagonal, that is,
\begin{eqnarray*}
D &:= & \Biggl\{ (a_{j,k} )_{1\leq j,k\leq d} = A \in [0,
\infty)^{d\times d}\dvt  \mathbf{x}^\top A\mathbf{x} \leq0 \mbox{ for all
}\mathbf{x}\in\R^d\setminus\{0\} \mbox{ s.t. }
\\
&&\hphantom{\Biggl\{} \sum_{i=1}^d x_i = 0,
a_{j,k} = a_{k,j}, a_{j,j} = 0 \mbox{ for all } 1
\leq j,k\leq d \Biggr\},
\end{eqnarray*}
and let $D_0\subset D$ be the space of strictly conditionally negative
definite matrices, that is, where $\mathbf{x}^\top A\mathbf{x} < 0$
holds in
the above definition. In particular, note that $D_0$ is open in $D$ and
that $D$ is a suitable subspace for the measures $\eta^2_n$ since
$\eta_n^2(D) = 1$ for all $n\in\N$. For $\Lambda=  (\lambda
_{j,k} )_{1\leq j,k\leq d} \in[0,\infty)^{d\times d}$, define a
family of transformed matrices by
\[
\Gamma_{l,m} (\Lambda) = 2 \bigl(\lambda_{m_j,m_l}^2
+ \lambda _{m_k,m_l}^2 - \lambda_{m_j,m_k}^2
\bigr)_{1\leq j,k \leq l-1},
\]
where $2\leq l\leq d$ and $m = (m_1,\dots, m_l)$ with $1\leq m_1 <
\cdots< m_l\leq d$. It follows from the proof of Lemma~2.1 in
Berg \textit{et al.} \cite{ber1984} that if $\Lambda\in D_0$, then
$\Gamma_{l,m} (\sqrt{\Lambda})$ is a (strictly) positive definite
matrix.

Denote by $S( \cdot | \Psi)$ the so-called survivor function of an
$l$-dimensional normal random vector with mean vector $\mathbf{0}$ and
covariance matrix $\Psi$. That is, if $\mathbf{X}\sim N(\mathbf
{0},\Psi)$ and $\mathbf{x}\in\R^l$, then $S(\mathbf{x}|\Psi) = \P
( X_1 > x_1, \dots, X_l > x_l  )$. If $\Psi$ is not a
covariance matrix, we put $S(\mathbf{x}|\Psi) = 0$.

For a fixed $\Lambda=  ( \lambda_{j,k} )_{1\leq j,k \leq
d}\in[0,\infty)^{d\times d}$, let
\[
H_\Lambda(\mathbf{x}) = \exp \Biggl(\sum_{l=1}^d
(-1)^l \sum_{ m:
1\leq m_1 < \cdots< m_l\leq d } h_{l,m,\Lambda}(x_{m_1},
\dots, x_{m_l}) \Biggr),
\]
where
\[
h_{l,m,\Lambda}(y_1,\dots,y_l) = \int
_{y_l}^\infty S \bigl( \bigl( y_i - z + 2
\lambda^2_{m_i,m_l} \bigr)_{i=1,\dots,l-1} | \Gamma
_{l,m}(\Lambda) \bigr) \mathrm{e}^{-z} \,\mathrm{d}z,
\]
for $2\leq l\leq d$ and $h_{1,m,\Lambda}(y) = \mathrm{e}^{-y}$ for $m=1,\dots
,d$. For alternative representations of the multivariate H\"usler--Reiss
distribution $H_\Lambda$, see Joe \cite{joe1994}
and Kabluchko \cite{kab2011}. With this notation, we are
now in a position to state the following theorem.

\begin{theorem}
\label{multi_dim}
Consider a triangular array of independent $d$-dimensional Gaussian
random vectors as above.
If for $n\to\infty$ the measure $\eta_n$ in \eqref{emp_meas_d}
converges weakly to some probability measure $\eta$ on $[0,\infty
)^{d\times d}$, i.e., $\eta_n \Rightarrow\eta$, s.t. $\eta^2(D_0) =
1$, then
\begin{eqnarray*}
\max_{i=1,\dots,n} b_n(\mathbf{X}_{i,n} -
b_n)
\end{eqnarray*}
converges in distribution to a random vector with distribution function
$H_\eta$ given by
%
\begin{eqnarray}
\label{fdd} H_\eta(x_1,\dots,x_d) = \exp
\biggl(\int_{[0,\infty)^{d\times d}} \log H_\Lambda(x) \eta(\mathrm{d}\Lambda)
\biggr), \qquad x\in\R^d.
\end{eqnarray}
\end{theorem}

\begin{rem}
\label{rem_repr}
Similarly to Remark~\ref{rem2}, we can give an alternative
construction of the distribution $H_\eta$ in terms of Poisson point processes.
Let $\sum_{i=1}^\infty\delta_{U_i}$ be a Poisson point process on
$\R$ with intensity $\mathrm{e}^{-u}\,\mathrm{d}u$ and suppose that $\mathbf{B}$ has the
multivariate normal distribution $N(-\diag(\Gamma_{d,(1,\dots
,d)}(\Lambda))/2,\Gamma_{d,(1,\dots,d)}(\Lambda))$ with random mean
and variance, where $\Lambda$ is $\eta$-distributed. Then, for a
sequence $(\mathbf{B}_i)_{i\in\N}$ of i.i.d. copies of $\mathbf
{B}$, the random vector $\max_{i\in\N} (U_i, U_i + \mathbf{B}_i)$
has distribution $H_\eta$.
\end{rem}

\begin{rem}
We believe that the above theorem also holds in the case when $\eta$
has positive measure on \emph{non-strictly} conditionally negative
definite matrices, i.e., $\eta^2(D\setminus D_0) > 0$. Our proof of
this theorem however breaks down in this situation such that another
technique might be necessary.
\end{rem}

\begin{rem}
It is an open question if, similarly to the bivariate case, the
distribution $H_\eta$ uniquely determines the mixture measure $\eta$.
By Remark~\ref{rem_repr}, this problem is equivalent to the question
if the distribution of normal mixtures
$N(-\diag(\Gamma_{d,(1,\dots,d)}(\Lambda))/2,\break \Gamma_{d,(1,\dots
,d)}(\Lambda))$, where $\Lambda$ is $\eta$-distributed, determines
the measure $\eta$.
The solution of this problem is crucial to show that in Theorem~\ref
{multi_dim} the weak convergence $\eta_n\Rightarrow\eta$ is also
necessary for the
convergence of the maxima.
\end{rem}

\section{Application to Brown--Resnick processes}
\label{sec_processes}

The $d$-dimensional H\"usler--Reiss distributions arise in the theory of
maxima of Gaussian random
fields as the finite-dimensional distributions of the Brown--Resnick
process  \cite{bro1977} and its
generalizations  \cite{kab2009}. In this section,
we introduce a new class of max-stable processes
with finite-dimensional distributions given by \eqref{fdd} for
suitable measures $\eta$.

Let us briefly recall the definition of the processes introduced in
Kabluchko \textit{et al.} \cite{kab2009}. For a zero-mean
Gaussian process $\{W(t), t\in\R^d\}$ with stationary increments,
variance $\sigma^2(t)$ and
variogram $\gamma(t) = \E( W(t) - W(0))^2$, consider i.i.d. copies $\{
W_i, i\in\N\}$ of $W$ and a
Poisson point process $\sum_{i=1}^\infty\delta_{U_i}$ on $\R$ with
intensity $\mathrm{e}^{-u}\,\mathrm{d}u$, independent
of the family $W_i,i\in\N$. Kabluchko \textit{et al.} \cite{kab2009}
showed that the Brown--Resnick process
%
\begin{eqnarray}
\label{BRproc} \xi(t) = \max_{i\in\N} \bigl(U_i +
W_i(t) - \sigma(t)^2/2\bigr),\qquad  t\in\R^d,
\end{eqnarray}
is max-stable and stationary with standard Gumbel margins and that its
law depends only on the variogram $\gamma$.

We generalize this construction by allowing the variogram of the
Gaussian processes $W_i$ to be random. In fact, this defines
a new class of max-stable processes whose finite-dimensional
distributions are of the form \eqref{fdd}.

\begin{deff}
\label{def1}
Let $V_d$ be the measurable space of all variograms on $\R^d$, i.e.,
conditionally negative definite functions $\gamma$ on $\R^d$
with $\gamma(0) = 0$, equipped with the product $\sigma$-algebra.
Further, let $\mathbb{Q}$ be an arbitrary probability measure on this
space and $\gamma_i$, $i\in\N$, be an i.i.d. sequence of random
variables with distribution $\mathbb{Q}$.
For each $i\in\N$, let $W_i$ be a random field such that,
conditionally on $\gamma_i$, $W_i$ is
a zero-mean Gaussian process with stationary increments, variogram
$4\gamma_i$ and $W_i(0) = 0$ a.s. Further, let $\sum_{i=1}^\infty
\delta_{U_i}$ be a Poisson point process on $\R$ with intensity
$\mathrm{e}^{-u}\,\mathrm{d}u$ which is independent of the $W_i$, $i\in\N$.
Then, the process $\xi_{\mathbb{Q}}$ given by
\begin{eqnarray*}
\xi_{\mathbb{Q}}(t) = \max_{i\in\N} \bigl(U_i +
W_i(t) - 2\gamma _i(t)\bigr),\qquad  t\in\R^d,
\end{eqnarray*}
is called a max-mixture of Brown--Resnick processes w.r.t. the mixture
measure $\mathbb{Q}$.
\end{deff}

Note that a different kind of process can be defined
through a hierarchical or Bayesian approach, which is not
considered here and which does not lead to a max-stable
process, in general: first, exactly one realization of the
variogram is drawn from $\mathbb Q$. Then, conditionally
on this realization, a Brown--Resnick process is
simulated. Obviously, the resulting process must
lie in the max-domain of attraction of the process given
in Definition~\ref{def1}, with the same law $\mathbb Q$ for the variograms.
This implies immediately the following proposition;
a direct proof is given in Section \ref{sec_proofs}.

\begin{prop}\label{prop1}
The process $\xi_{\mathbb{Q}}$ is max-stable and stationary and has
finite-dimensional distributions given by \eqref{fdd} with $\eta$
induced by $\mathbb{Q}$.
\end{prop}

This new class of processes thus also realize a large variety of
extremal dependence structures,
which can for instance be measured by the \emph{extremal correlation
function} $\rho$  \cite{sch2003,str2012}.
For a stationary, max-stable random field $\{X(t),t\in\R^d\}$ with
Gumbel margins, $\rho$ is a natural approach to measure bivariate
extremal dependencies and for $h\in\R^d$ it is determined by
\begin{eqnarray*}
\P\bigl(X(0) \leq x, X(h) \leq x\bigr) = \P\bigl(X(0) \leq x
\bigr)^{2-\rho(h)},
\end{eqnarray*}
for some (and hence all) $x\in\R$. For instance, for the process in
\eqref{BRproc} it is given by
\begin{eqnarray*}
\rho_\gamma(h) = 2 \bigl(1 - \Phi \bigl( \sqrt{\gamma(h)}/2 \bigr)
\bigr), \qquad h\in\R^d.
\end{eqnarray*}
The processes introduced in Definition~\ref{def1} extend this class of
extremal correlation functions. Indeed, for an arbitrary variogram
$\gamma$ and mixture measure $\nu$ on $(0,\infty)$, let the measure
$\mathbb{Q}$\vadjust{\goodbreak} in Definition~\ref{def1} be the law of the scale mixture
$S^2\gamma$, where $S$ is $\nu$-distributed. The corresponding
process $\xi_{\mathbb{Q}}$ possesses the extremal correlation function
%
\begin{eqnarray}
\label{mix_ecf} \rho_{\gamma,\nu}(h) = \int_0^\infty2
\bigl(1 - \Phi \bigl( s\sqrt{\gamma(h)} \bigr) \bigr) \nu(\mathrm{d}s), \qquad h\in
\R^d.
\end{eqnarray}
Moreover, from the construction it is obvious that processes with this
dependence structure can be simulated easily as max-mixtures of
Brown--Resnick processes.
Gneiting \cite{gne1999} analyzes this kind of scale
mixtures of the complementary error function in a more general
framework. The following corollary is a consequence of Theorems 3.7 and
3.8 therein.

\begin{cor}
\label{prop_gne}
For a fixed variogram $\gamma$ the class of extremal correlation
functions in \eqref{mix_ecf} is given by all functions
$\varphi(\sqrt {\gamma(h)}), h\in\R^d$, where $\varphi\dvtx [0,\infty)\to\R$ is a
continuous function with $\varphi(0) = 1$, $\lim_{h\to\infty}
\varphi(h) = 0$, and the function
%
\begin{eqnarray}
\label{compl_mon} (-1)^k \frac{ \mathrm{d}^k}{\mathrm{d}h^k} \bigl[-
\varphi'(\sqrt{h})\bigr]
\end{eqnarray}
is nonnegative for infinitely many positive integers $k$, i.e.,
$-\varphi'(\sqrt{h})$ is completely monotone
(cf. the paragraph after Theorem~3.8 in Gneiting \cite{gne1999}).
\end{cor}

For instance, if $\nu_1$ is the Rayleigh distribution \eqref
{f_rayleigh} with density $f_1$, we obtain
\begin{eqnarray*}
\rho_{\gamma,\nu_1}(h) = 2 \biggl(1 - \int_0^\infty
\Phi (\lambda )f_{\sqrt{\gamma(h)}} (\lambda ) \,\mathrm{d}\lambda \biggr) = 1 - \biggl(
\frac{\gamma(h)}{\gamma(h) + 1} \biggr)^{1/2},\qquad  h\in\R^d,
\end{eqnarray*}
immediately from equation \eqref{rayleigh_mix}. In fact, $\rho
_{\gamma,\nu_1}(h) = \psi(\gamma(h))$,
where $\psi(x) = 1 -  (x/(x + 1)  )^{1/2}$ is a completely
monotone member of the Dagum family  \cite{ber2008}.
However, it is interesting to note that when writing $\rho_{\gamma
,\nu_1}(h) = \varphi(\sqrt{\gamma(h)})$ with
$\varphi(x) = 1 -  (x^2/(x^2 + 1)  )^{1/2}$ as in Corollary~\ref{prop_gne}, the function $\varphi$
merely satisfies \eqref{compl_mon} but is not completely monotone.

Similarly, for the Type-2 Gumbel distribution with $b=1$, the extremal
correlation function
is given by $\rho(h) = \exp( -\sqrt{ 2\gamma(h) })$. In particular,
it follows that for any
variogram $\gamma$ and any $r>0$ the function
\begin{eqnarray*}
\rho(h) = \exp \bigl( -r\sqrt{ \gamma(h) } \bigr), \qquad h\in\R^d,
\end{eqnarray*}
is an extremal correlation function. Since this class of extremal
correlation functions is closed under the operation of mixing with
respect to probability measures, this implies that for any measure $\mu
\in\mathcal{M}_1((0,\infty))$ the Laplace transform $\mathcal{L}
\mu$ yields an extremal correlation function
\begin{eqnarray*}
\rho_\mu(h) = \mathcal{L} \mu\bigl(\sqrt{ \gamma(h) }\bigr) = \int
_0^\infty \mathrm{e}^{ -r\sqrt{ \gamma(h) }} \mu(\mathrm{d}r),\qquad  h\in
\R^d.
\end{eqnarray*}
Equivalently, for any completely monotone function $\psi$ with $\psi
(0) = 1$, the function $\psi(\sqrt{ \gamma(h) })$ is an extremal
correlation function. A corresponding max-stable, stationary random
field is given by a max-mixture of Brown--Resnick processes with
suitable $\nu\in\mathcal{M}_1((0,\infty))$.

\section{Proofs}
\label{sec_proofs}
\begin{pf*}{Proof of Theorem~\ref{thm1}}
Let $x,y\in\mathbb{R}$ and put $u_n(z) = b_n + z/b_n$, for $z\in
\mathbb{R}$.
%
\begin{eqnarray}\label{eq:01}
&&\log\P \Bigl( \max_{i=1,\dots,n} X^{(1)}_{i,n}
\leq u_n(x), \max_{i=1,\dots,n} X^{(2)}_{i,n}
\leq u_n(y) \Bigr)\nonumber
\\
&&\quad  = \sum_{i=1}^n \log \bigl( 1 -
\bigl[\P\bigl( X^{(1)}_{i,n} > u_n(x)\bigr) + \P
\bigl( X^{(2)}_{i,n} > u_n(y)\bigr) - \P \bigl(
X^{(1)}_{i,n} > u_n(x), X^{(2)}_{i,n}
> u_n(y) \bigr) \bigr] \bigr)
\nonumber\\[-8pt]\\[-8pt]
&&\quad = - \sum_{i=1}^n \P\bigl(
X^{(1)}_{i,n} > u_n(x)\bigr) - \sum
_{i=1}^n \P\bigl( X^{(2)}_{i,n} >
u_n(y)\bigr)\nonumber
\\
&&\qquad {} + \sum_{i=1}^n \P \bigl(
X^{(1)}_{i,n} > u_n(x), X^{(2)}_{i,n}
> u_n(y) \bigr) + R_n,\nonumber
\end{eqnarray}
where $R_n$ is a remainder term from the Taylor expansion of $\log
(1-z) = -z - z^2/2 + \mathrm{o}(z^2)$, as $z\to0$. Thus, by \eqref{asymp_neg}
there is an $n_0\in\N$ s.t. for all $n\geq n_0$ we have
%
\begin{eqnarray}\label{Rn}
|R_n| &\leq&\sum_{i=1}^n
\bigl[\P\bigl( X^{(1)}_{i,n} > u_n(x)\bigr) + \P
\bigl( X^{(2)}_{i,n} > u_n(y)\bigr)
\bigr]^2\nonumber
\\
& \leq&\max_{i=1,\ldots, n} \bigl[\P\bigl(
X^{(1)}_{i,n} > u_n(x)\bigr) + \P\bigl(
X^{(2)}_{i,n} > u_n(y)\bigr) \bigr]
\\
&&{} \cdot\sum_{i=1}^n \bigl[\P
\bigl( X^{(1)}_{i,n} > u_n(x)\bigr) + \P\bigl(
X^{(2)}_{i,n} > u_n(y)\bigr) \bigr].\nonumber
\end{eqnarray}
For the one-dimensional margins, we observe
\begin{eqnarray*}
- \sum_{i=1}^n \P \bigl(
X^{(1)}_{i,n} > u_n(x) \bigr) &=& - \sum
_{i=1}^n\int_{u_n(x)/\sigma_{i,n,1}}^\infty
\phi(z) \,\mathrm{d}z
\\
& =& - \sum_{i=1}^n \int
_{x/\sigma_{i,n,1} - b_n^2(1 - 1/\sigma
_{i,n,1})}^\infty \frac{1}{b_n} \phi
\bigl(u_n(z)\bigr) \,\mathrm{d}z
\\
& =& - \int_{[0,\infty]\times\mathbb{R}^2} \int_{(1 - \theta
/b_n^2)x - \theta}^\infty
\mathrm{e}^{-z-z^2/(2b_n^2)} \,\mathrm{d}z \eta_n\bigl(\mathrm{d}(\lambda ,\theta,\gamma)\bigr),
\end{eqnarray*}
where for the last equation we used $b_n=n\phi(b_n)$ and the
definition of the measure $\eta_n$ in \eqref{def_eta_n} to replace
the sum
by the integral. For $n\in\N$, let
\begin{eqnarray*}
h_n(\theta) = \int_{(1 - \theta/b_n^2)x - \theta}^\infty
\mathrm{e}^{-z-z^2/(2b_n^2)} \,\mathrm{d}z, \qquad \theta\in\R.
\end{eqnarray*}
Clearly, as $n\to\infty$, $h_n$ converges uniformly on compact sets
to the function $h(\theta) = \exp(\theta- x)$. Note that $h$ and
$h_n$ are continuous functions on $\R$. Put $\omega= (\lambda,\theta
,\gamma)$ and observe for $K>0$ that
%
\begin{eqnarray}\label{hn_conv}
&&\biggl\llvert \int_{[0,\infty]\times\mathbb{R}^2} h_n(\theta)
\eta_n(\mathrm{d}\omega) - \int_{[0,\infty]\times\mathbb{R}^2} h(\theta) \eta(\mathrm{d}
\omega)\biggr\rrvert
\nonumber\\
&&\quad  \leq\biggl\llvert \int_{[0,\infty]\times\mathbb{R}^2} h_n(
\theta)\mathbf{1}_{h_n > K} \eta_n(\mathrm{d}\omega) - \int
_{[0,\infty
]\times\mathbb{R}^2} h(\theta)\mathbf{1}_{h > K} \eta(\mathrm{d}\omega )\biggr
\rrvert
\\
&&\qquad {} + \biggl\llvert \int_{[0,\infty]\times\mathbb{R}^2} h_n(
\theta)\mathbf{1}_{h_n < K} \eta_n(\mathrm{d}\omega) - \int
_{[0,\infty
]\times\mathbb{R}^2} h(\theta)\mathbf{1}_{h < K} \eta(\mathrm{d}\omega )\biggr
\rrvert .\nonumber
\end{eqnarray}
By Theorem~5.5 in Billingsley \cite{bil1968} (see also the
remark after the theorem), $\eta_n h_n^{-1}$ converges weakly to $\eta
h^{-1}$. Moreover, since $h\mathbf{1}_{h < K}$ and the $h_n\mathbf
{1}_{h_n < K}$ are uniformly bounded in $n$, the second summand in
\eqref{hn_conv} converges to $0$ as $n\to\infty$, for arbitrary
$K>0$. By the integrability condition \eqref{unif_int} and Fatou's
lemma, we have $\int_{[0,\infty]\times\mathbb{R}^2} h(\theta) \eta
(\mathrm{d}\omega) < \infty$ and hence, also the first summand in \eqref
{hn_conv} tends to zero as $K,n\to\infty$. Consequently,
%
\begin{eqnarray}
\label{eq:02} - \sum_{i=1}^n \P \bigl(
X_{i,n}^{(1)} > u_n(x) \bigr) \to- \int
_{[0,\infty]\times\mathbb{R}^2} \exp \bigl[- (x - \theta ) \bigr] \eta(\mathrm{d}\omega).
\end{eqnarray}
Similarly, we get
%
\begin{eqnarray}
\label{eq:03} - \sum_{i=1}^n \P \bigl(
X_{i,n}^{(2)} > u_n(y) \bigr) \to- \int
_{[0,\infty]\times\mathbb{R}^2} \exp \bigl[- (y - \gamma ) \bigr] \eta(\mathrm{d}\omega).
\end{eqnarray}
It now also follows from \eqref{asymp_neg}, \eqref{Rn}, \eqref
{eq:02} and \eqref{eq:03} that the remainder term $R_n$ converges to
zero as $n\to\infty$.

We now turn to the third term in \eqref{eq:01}.
\begin{eqnarray*}
&&\sum_{i=1}^n \P\bigl(
X^{(1)}_{i,n}/\sigma_{i,n,1} > u_n(x)/\sigma
_{i,n,1}, X^{(2)}_{i,n}/\sigma_{i,n,2} >
u_n(y)/\sigma_{i,n,2}\bigr)
\\
&&\quad  =\sum_{i=1}^n \int_{u_n(y)/\sigma_{i,n,2}}^\infty
\biggl[ 1 - \Phi \biggl(\frac{u_n(x)/\sigma_{i,n,1} - \rho_{i,n} z}{(1-\rho
_{i,n}^2)^{1/2}} \biggr) \biggr] \phi(z) \,\mathrm{d}z
\\
&&\quad  =\frac{1}n \sum_{i=1}^n \int
_{y/\sigma_{i,n,2} - b_n^2
(1-1/\sigma_{i,n,2} )}^\infty \biggl[ 1 - \Phi \biggl(
\frac
{u_n(x)/\sigma_{i,n,1} - \rho_{i,n}u_n(z) }{(1-\rho_{i,n}^2)^{1/2}} \biggr) \biggr] \mathrm{e}^{-z-z^2/(2b_n^2)} \,\mathrm{d}z
\\
&&\quad  =\int_{[0,\infty]\times\R^2} \int_{(1-\gamma/b_n^2)y - \gamma
}^\infty
\bigl[ 1 - \Phi \bigl( s_n(\lambda, \theta, z, x) \bigr) \bigr]
\mathrm{e}^{-z-z^2/(2b_n^2)} \,\mathrm{d}z \eta_n(\mathrm{d}\omega),
\end{eqnarray*}
where we used $b_n = n \phi(b_n)$ for the second last equation and
$s_n$ is defined by
\begin{eqnarray*}
s_n(\lambda, \theta, z, x) := \frac{\lambda}{(1-\lambda^2/b_n^2)^{1/2}} +
\frac{(1-\theta
/b_n^2)x-z-\theta}{(1-\lambda^2/b_n^2)^{1/2}2\lambda} + \frac
{\lambda z}{(1-\lambda^2/b_n^2)^{1/2}b_n^2}.
\end{eqnarray*}
For the last equation, we replaced the sum by the integral w.r.t. the
empirical measure $\eta_n$ as in \eqref{def_eta_n}. Note that for
$i\in\{1,\dots, n\}$, in fact a short computation yields
\begin{eqnarray*}
s_n \Bigl(\sqrt{b_n^2(1-
\rho_{i,n})/2}, b_n^2(1-1/\sigma_{i,n,1}),
z, x \Bigr) = \frac{u_n(x)/\sigma_{i,n,1} - \rho_{i,n}u_n(z)
}{(1-\rho_{i,n}^2)^{1/2}}.
\end{eqnarray*}
For $n\in\N$, let
\begin{eqnarray*}
g_n(\lambda, \theta,\gamma) = \mathbf{1}_{\lambda\leq b_n}\int
_{(1-\gamma/b_n^2)y - \gamma}^\infty \bigl[ 1 - \Phi \bigl(
s_n(\lambda, \theta, z, x) \bigr) \bigr] \mathrm{e}^{-z-z^2/(2b_n^2)} \,\mathrm{d}z
\end{eqnarray*}
be a measurable function on $[0,\infty]\times\R^2$. It is easy to
see, that as $n\to\infty$, $g_n$ converges pointwise to the function
\begin{eqnarray*}
g(\lambda,\theta,\gamma) = \int_{y-\gamma}^\infty \bigl[
1 - \Phi \bigl( s(\lambda, \theta, z, x) \bigr) \bigr] \mathrm{e}^{-z} \,\mathrm{d}z,
\end{eqnarray*}
with
\begin{eqnarray*}
s(\lambda, \theta, z, x ) := \lambda+ \frac{x-z-\theta}{2\lambda}.
\end{eqnarray*}
Note that $g$ is a continuous function on $[0,\infty]\times\R^2$ and
$g(0,\theta,\gamma) = g_n(0,\theta,\gamma) =\linebreak[4]  \exp({-\max(x-\theta
,y-\gamma)})$ and $g(\infty,\theta,\gamma) = g_n(\infty,\theta
,\gamma) = 0$, for any $(\theta,\gamma)\in\R^2$ and $n$
sufficiently large.
Here, the values are understood as the limits as $\lambda\to0$ and
$\lambda\to\infty$ (using dominated convergence), respectively, for example,
$\lim_{\lambda\to0} g(\lambda,\theta,\gamma) = \int_{y-\gamma
}^\infty\mathbf{1}_{z>x-\theta} \mathrm{e}^{-z} \,\mathrm{d}z = \exp({-\max(x-\theta
,y-\gamma)})$.
In order to establish the weak convergence $\eta_n g_n^{-1}
\Rightarrow\eta g^{-1}$, we show that $g_n$ converges uniformly on
compact sets to $g$ as $n\to\infty$. To this end, let $C = [0,\infty
]\times[\theta_0,\theta_1]\times[\gamma_0,\gamma_1]$ be an
arbitrary compact set in $[0,\infty]\times\R^2$ and let $\epsilon
>0$ be given. First, note that instead of $g_n$ it suffices to consider
the function $\tilde{g}_n$, defined as
\begin{eqnarray*}
\tilde{g}_n(\lambda, \theta,\gamma) = \mathbf{1}_{\lambda\leq
b_n}\int
_{(1-\gamma/b_n^2)y - \gamma}^\infty \bigl[ 1 - \Phi \bigl(
s_n(\lambda, \theta, z, x) \bigr) \bigr] \mathrm{e}^{-z} \,\mathrm{d}z,
\end{eqnarray*}
since for $n$ large enough
\begin{eqnarray*}
\sup_{(\lambda,\theta,\gamma)\in C} | g_n(\lambda, \theta,\gamma ) -
\tilde{g}_n(\lambda, \theta,\gamma)| \leq\mathbf{1}_{\lambda
\leq b_n}
\int_{-2|y| - \gamma_1}^\infty \mathrm{e}^{-z}
\bigl(1-\mathrm{e}^{-z^2/(2b_n^2)}\bigr)\,\mathrm{d}z \to0,
\end{eqnarray*}
as $n \to\infty$, by dominated convergence. Further, for any
$\epsilon> 0$, let $z_1 > -\log\epsilon$ which implies $\int_{z_1}^\infty \mathrm{e}^{-z} \,\mathrm{d}z < \epsilon$. We note that for $n$ large enough
\begin{eqnarray*}
s_n(\lambda, \theta, z, x) &\geq& \bigl(1-\lambda^2/b_n^2
\bigr)^{-1/2} \biggl( \lambda \biggl(1 + \frac{-2|y| - \gamma_1}{b_n^2} \biggr) +
\frac{-2|x|-z_1-\theta_1}{2\lambda} \biggr)
\\
& \geq& \biggl( \frac\lambda2 + \frac{-2|x|-z_1-\theta_1}{2\lambda
} \biggr),
\end{eqnarray*}
for all $\lambda\leq b_n$, $-2|y| - \gamma_1\leq z\leq z_1$ and
$(\lambda,\theta,\gamma)\in C$, independently of $n\in\N$. Hence,
there is a $\lambda_1>0$ s.t. for all $\lambda_1\leq\lambda\leq b_n$
\[
1 - \Phi \bigl( s_n(\lambda, \theta, z, x) \bigr) < \epsilon
\mathrm{e}^{-2|y| - \gamma_1}.
\]
Thus, for all $n\in\N$ large enough,
\begin{eqnarray*}
\sup_{(\lambda,\theta,\gamma)\in C, \lambda\geq\lambda_1 } \tilde{g}_n(\lambda, \theta,\gamma)
\leq\mathbf{1}_{\lambda\leq
b_n} \biggl(\int_{-2|y| - \gamma_1}^{z_1}
\epsilon \mathrm{e}^{-2|y| - \gamma
_1}\mathrm{e}^{-z} \,\mathrm{d}z + \int_{z_1}^\infty
\mathrm{e}^{-z} \,\mathrm{d}z \biggr) \leq2\epsilon,
\end{eqnarray*}
and in the same manner, $ \sup_{(\lambda,\theta,\gamma)\in C,
\lambda\geq\lambda_1 } g(\lambda, \theta,\gamma) \leq2\epsilon
$. Furthermore, we observe
\begin{eqnarray*}
\lim_{\lambda\to0} \Phi \bigl( s_n(\lambda, \theta, z,
x) \bigr) = \mathbf{1}_{z < (1-\theta/b_n^2)x - \theta} \quad \mbox{and}\quad  \lim_{\lambda\to0}
\Phi \bigl(s(\lambda, \theta, z, x) \bigr) = \mathbf{1}_{z < x - \theta}.
\end{eqnarray*}
Choose $n_0\in\N$ such that for all $n > n_0$ and all $\theta\in
[\theta_0,\theta_1]$ we find an open interval $(a_{\theta},
b_{\theta})$ of size $\epsilon/2$ that contains $\{(1-\theta/b_n^2)x
- \theta, x - \theta\}$. Put $I_{\theta} = (a_{\theta} - \epsilon
/4, b_{\theta}+ \epsilon/4)$, then we find a $\lambda_0 > 0 $, s.t.
for all $(\lambda,\theta,\gamma)\in C,\lambda\leq\lambda_0$,
$z\in I_{\theta}$ and $n > n_0$, we have
$|\Phi ( s_n(\lambda, \theta, z, x)  ) - \Phi
(s(\lambda, \theta, z, x)  )| \leq\epsilon$. Consequently,
\begin{eqnarray*}
&&\sup_{(\lambda,\theta,\gamma)\in C, \lambda\leq\lambda_0 } \bigl|\tilde{g}_n(\lambda, \theta,\gamma)
- g(\lambda, \theta,\gamma )\bigr|
\\
&&\quad \leq\sup_{(\lambda,\theta,\gamma)\in C, \lambda\leq\lambda_0 } \int_{-2|y| - \gamma_1}^\infty
(\mathbf{1}_{z\in I_\theta} + \epsilon\mathbf{1}_{z\in\R\setminus I_\theta} )
\mathrm{e}^{-z}\,\mathrm{d}z \leq2\epsilon \mathrm{e}^{2|y| + \gamma_1}.
\end{eqnarray*}
Choose $n_1\in\N$, s.t. $b_{n_1} > \lambda_1$. For $\lambda_0\leq
\lambda\leq\lambda_1$ and $n>n_1$,
%
\begin{eqnarray}
\label{eq2}&&\bigl| s_n(\lambda, \theta, z, x) - s(\lambda, \theta, z,
x) \bigr|
\nonumber\\
&&\quad = \biggl\llvert \biggl(\lambda+ \frac{x - z - \theta}{ 2
\lambda} \biggr) \biggl(1 -
\frac{1}{(1-\lambda
_1^2/b_n^2)^{1/2}} \biggr) - \frac{\lambda^2 z - \theta}{(1-\lambda
_1^2/b_n^2)^{1/2}b_n^22\lambda} \biggr\rrvert
\\
&&\quad  \leq M_1 \biggl\llvert 1 - \frac{1}{(1-\lambda
_0^2/b_n^2)^{1/2}}\biggr
\rrvert + \frac{M_2}{(1-\lambda
_1^2/b_n^2)^{1/2}b_n^2} \to0\nonumber
\end{eqnarray}
for $n\to\infty$, uniformly in $z\in[-2|y| - \gamma_1,z_1]$ and
$(\lambda,\theta,\gamma)\in C$ with $\lambda_0\leq\lambda\leq
\lambda_1$. Here, $M_1$ and $M_2$ are positive constants that only
depend on $x,y,\lambda_0,\lambda_1,\theta_0,\theta_1,\gamma_1$.
Let $n_2\in\N$, s.t. for all $n>\max(n_1,n_2)$ the difference in
\eqref{eq2} is less than or equal to $\epsilon \mathrm{e}^{-2|y| - \gamma_1}$.
By the Lipschitz continuity of $\Phi$, we obtain for all $\lambda
_0\leq\lambda\leq\lambda_1$ and $(\lambda,\theta,\gamma)\in C$,
\begin{eqnarray*}
&&\int_{-2|y| - \gamma_1}^\infty\bigl\llvert \Phi \bigl(
s_n(\lambda, \theta, z, x) \bigr) - \Phi \bigl( s(\lambda, \theta,
z, x) \bigr) \bigr\rrvert \mathrm{e}^{-z} \,\mathrm{d}z
\\
&&\quad  \leq\int_{-2|y| - \gamma_1}^{z_1} \bigl\llvert
s_n(\lambda, \theta, z, x) - s(\lambda, \theta, z, x) \bigr\rrvert
\mathrm{e}^{-z} \,\mathrm{d}z + \int_{z_1}^\infty
\mathrm{e}^{-z} \,\mathrm{d}z
\\
&&\quad  \leq\int_{-2|y| - \gamma_1}^{z_1} \epsilon \mathrm{e}^{-2|y| - \gamma_1}
\mathrm{e}^{-z} \,\mathrm{d}z + \int_{z_1}^\infty
\mathrm{e}^{-z} \,\mathrm{d}z \leq2\epsilon.
\end{eqnarray*}
Putting the parts together yields
\begin{eqnarray*}
\lim_{n\to\infty} \sup_{(\lambda,\theta,\gamma)\in C} \bigl|\tilde
{g}_n(\lambda,\theta,\gamma) - g(\lambda,\theta,\gamma)\bigr| = 0.
\end{eqnarray*}
The assumptions of Theorem~5.5 in Billingsley \cite{bil1968}
are satisfied and therefore $\eta_n g_n^{-1}$ converges weakly to
$\eta g^{-1}$. By a similar argument as in \eqref{hn_conv} together
with the integrability condition \eqref{unif_int}, we obtain for $n\to
\infty$
\begin{eqnarray*}
\sum_{i=1}^n \P\bigl(
X^{(1)}_{i,n} > u_n(x), X^{(2)}_{i,n}
> u_n(y)\bigr) \to \int_{[0,\infty]\times\R^2} g(\lambda,\theta,
\gamma) \eta \bigl(\mathrm{d}(\lambda,\theta,\gamma)\bigr).
\end{eqnarray*}
Finally, partial integration gives
\begin{eqnarray*}
g(\lambda,\theta,\gamma) &=& \mathrm{e}^{-(y-\gamma)} + \mathrm{e}^{-(x-\theta)} - \Phi
\biggl(\lambda+\frac{y-x + \theta- \gamma}{2\lambda} \biggr)\mathrm{e}^{-(x-\theta)}
\\
&&{}- \Phi \biggl(\lambda-\frac{y-x + \theta- \gamma}{2\lambda} \biggr)\mathrm{e}^{-(y-\gamma)}.
\end{eqnarray*}
Together with \eqref{eq:01}, \eqref{eq:02}, \eqref{eq:03} and the
fact that $R_n$ converges to zero, this implies the desired result.
\end{pf*}
\begin{pf*}{Proof of Theorem~\ref{thm2}}
Sufficiency is a simple consequence of Theorem~\ref{thm1}, where the
covariance matrix of $\mathbf{X}_{i,n}$ is given by
\[
\pmatrix{ 1 & \rho_{i,n}
\cr
\rho_{i,n} & 1 }. %
\]
For necessity, suppose that the sequence $(\max_{i=1,\dots,n}
b_n(\mathbf{X}_{i,n} - b_n))_{n\in\N}$ of bivariate random vectors
converges in distribution to some random vector $Y$. Let the $\nu_n$,
$n\in\N$, be defined as in \eqref{emp_meas} and assume that the
sequence $ ( \nu_n )_{n\in\N}\subset\mathcal
{M}_1([0,\infty])$ does not converge. Then, by sequential compactness,
it has at least two different accumulation points $\nu,\tilde{\nu
}\in\mathcal{M}_1([0,\infty])$. By the first part of this theorem,
$ (\max_{i=1,\dots,n} b_n(\mathbf{X}_{i,n} - b_n) )_{n\in
\N}$ converges in distribution to $F_{\nu}\equiv F_{\tilde{\nu}}$.
It now suffices to show that $F_{\nu}\equiv F_{\tilde{\nu}}$ implies
$\nu\equiv\tilde{\nu}$ to conclude that $ ( \nu_n
)_{n\in\N}\subset\mathcal{M}_1([0,\infty])$ converges to some
measure $\nu$ and that $Y$ has distribution $F_{\nu}$.

The fact that there is a one-to-one correspondence between H\"
usler--Reiss distributions
$F_\lambda$ and the dependence parameter $\lambda\in[0,\infty]$ is
straightforward  \cite{kab2009}.
Showing a similar result in our case, however, requires more effort.

To this end, for two measures $\nu_1,\nu_2\in\mathcal
{M}_1([0,\infty])$ define random
variables $Y_1$ and $Y_2$ with distribution $F_{\nu_1}$ and $F_{\nu
_2}$, respectively.
First, suppose that $\nu_1(\{\infty\}) = \nu_2(\{\infty\}) = 0$.
For $j=1,2$, by Remark~\ref{rem2}
we have the stochastic representation $Y_j = \max_{i\in\N}
(U_{i,j},U_{i,j} + B_{i,j})$, where
$\sum_{i=1}^\infty\delta_{U_{i,j}}$ are Poisson point process on $\R
$ with intensity $\mathrm{e}^{-u}\,\mathrm{d}u$
and the $(B_{i,j})_{i\in\N}$ are i.i.d. copies of the random variable
$B_j$ with normal distribution
$N(-2S_j^2,4S_j^2)$, where $S_j$ is $\nu_j$-distributed. Assume that
%
\begin{eqnarray}
\label{F_equal} F_{\nu_1}(x,y) = F_{\nu_2}(x,y), \qquad \mbox{for all }
x,y\in\R,
\end{eqnarray}
that is, the max-stable distributions of $Y_1$ and $Y_2$ are equal.
Since a Poisson point process is determined by its intensity on a
generating system of the $\sigma$-algebra, it follows that the point
processes $\Pi_1 = \sum_{i=1}^\infty\delta_{(U_{i,1},U_{i,1} +
B_{i,1})}$ and $\Pi_2=\sum_{i=1}^\infty\delta_{(U_{i,2},U_{i,2} +
B_{i,2})}$ are equal in distribution. Therefore, the measurable mapping
\[
h\dvtx  \R^2 \to\R^2, \qquad (x_1,x_2)
\mapsto(x_1, x_2 -x_1)
\]
induces two Poisson point processes $h(\Pi_1)$ and $h(\Pi_2)$ on $\R
^2$ with coinciding intensity measures $\mathrm{e}^{-u}\,\mathrm{d}u \P_{B_1}(\mathrm{d}x)$ and
$\mathrm{e}^{-u}\,\mathrm{d}u \P_{B_2}(\mathrm{d}x)$, respectively. Hence, $B_1$ and $B_2$ have the
same distribution. Denote by $\psi_j$ the Laplace transform of the
Gaussian mixture $B_j$, $j=1,2$. A straightforward calculation yields
for $u\in(0,1)$
\begin{eqnarray*}
\psi_j(u) = \E\exp( u B_j ) = \int
_{[0,\infty)} \exp\bigl( -2\lambda ^2\bigl( u -
u^2 \bigr) \bigr) \nu_j(\mathrm{d}\lambda),\qquad  j=1,2.
\end{eqnarray*}
By Lemma~7 in Kabluchko \textit{et al.} \cite{kab2009}, this implies the
equality of measures $\nu^2_1(\mathrm{d}\lambda) = \nu^2_2(\mathrm{d}\lambda)$, where
$\nu^2_j$ is the image
measure of $\nu_j$ under the transformation $[0,\infty]\to[0,\infty
]$, $\lambda\mapsto\lambda^2$, for $j=1,2$. Hence, it also holds
that $\nu_1 \equiv\nu_2$.

For arbitrary $\nu_1,\nu_2\in\mathcal{M}_1([0,\infty])$, we first
need to show that $\nu_1(\{\infty\}) = \nu_2(\{\infty\})$. For
$j=1,2$, observe that for $n\in\N$
\begin{eqnarray*}
&&-\log F_{\nu_j}(-n,0) +\log F_{\nu_j}(-n,n)
\\
&&\quad  = \int_{[0,\infty)} \Phi \biggl(\lambda+
\frac
{n}{2\lambda} \biggr)\mathrm{e}^{n} + \Phi \biggl(\lambda-
\frac{n}{2\lambda
} \biggr) - \Phi \biggl(\lambda+ \frac{n}{\lambda}
\biggr)\mathrm{e}^{n} - \Phi \biggl(\lambda- \frac{n}{\lambda}
\biggr)\mathrm{e}^{-n} \nu_j(\mathrm{d}\lambda )
\\
&&\qquad {} + \bigl(1-\mathrm{e}^{-n}\bigr)\nu_j\bigl(\{\infty\}
\bigr).
\end{eqnarray*}
Since the second derivative of $\Phi$ is negative on the positive real
line, we have the estimate
\begin{eqnarray*}
\mathrm{e}^n\biggl\llvert \Phi \biggl(\lambda+ \frac{n}{2\lambda} \biggr)
- \Phi \biggl(\lambda+ \frac{n}{\lambda} \biggr)\biggr\rrvert \leq
\frac
{n}{2\lambda\sqrt{2\uppi}} \mathrm{e}^n \mathrm{e}^{-(\lambda+ n/(2\lambda))^2/2},
\end{eqnarray*}
where the latter term converges pointwise to zero as $n\to\infty$.
Moreover, it is uniformly bounded in $n\in\N$ and $\lambda\in
[0,\infty)$ by a constant and hence, by dominated convergence
\begin{eqnarray*}
\lim_{n\to\infty} -\log F_{\nu_j}(-n,0) +\log
F_{\nu_j}(-n,n) = \nu_j\bigl(\{\infty\}\bigr),\qquad  j=1,2.
\end{eqnarray*}
It therefore follows from \eqref{F_equal} that $\nu_1(\{\infty\}) =
\nu_2(\{\infty\})$. If $\nu_1(\{\infty\}) < 1$, we apply the above
to the restricted probability measures $\nu_j( \cdot \cap[0,\infty
))/(1-\nu_j(\{\infty\}))$ on $[0,\infty)$, $j=1,2$, to obtain $\nu
_1\equiv\nu_2$.

The last claim of the theorem follows from the fact that the integrand
in \eqref{distribution2} is bounded and continuous in $\lambda$ for
fixed $x,y\in\R$, and hence, for $\nu,\nu_n\in\mathcal
{M}_1([0,\infty])$, $n\in\N$, weak convergence of $\nu_n$ to $\nu$
ensures the pointwise convergence of the distribution functions.
\end{pf*}
\begin{pf*}{Proof of Corollary~\ref{cor1}}
The first statement is a consequence of Theorem~\ref{thm2}, because
every sequence of random vectors can be understood as a triangular
array where the columns contain equal random vectors.

For the second claim, let $\nu\in\mathcal{M}_1([0,\infty])$ be an
arbitrary probability measure. Similarly as in Example~\ref{rem1},
define an i.i.d. sequence $(R_i)_{i\in\N}$ of samples of $\nu$.
Choosing $\rho_{i} = \max(1 - 2R_i^2/b_i^2,-1)$ as correlation of
$\mathbf{X}_i$ yields
\begin{eqnarray*}
\nu_n = \frac{1}n \sum_{i=1}^n
\delta_{\mathbf{1}_{R_i < b_i} R_i
b_n/b_i + \mathbf{1}_{R_i > b_i} b_n}.
\end{eqnarray*}
First, consider the measures $\tilde{\nu}_n = \frac{1}n \sum_{i=1}^n
\delta_{R_i b_n/b_i}$, for $n\in\N$. For $y\in[0,\infty]$ with
$\nu(\{y\}) = 0$ we observe
%
\begin{eqnarray}
\label{nu_n_ab} \tilde{\nu}_n\bigl( [0,y] \bigr) = \frac{1}n
\sum_{i = 1}^n \mathbf {1}_{[0,y]}(R_ib_n/b_i).
\end{eqnarray}

Fix $\epsilon> 0$ and recall from \eqref{bn} that $b_n/\sqrt{2\log
n} \to1$ as $n\to\infty$. Hence, choose $n$ large enough such that
$i > n^{1/(1+\epsilon)^2}$ implies $b_n/b_i < 1 + \epsilon$. Let
$n_\epsilon$ denote the smallest integer which is strictly larger than
$n^{1/(1+\epsilon)^2}$, then \eqref{nu_n_ab} yields
\begin{eqnarray*}
\Biggl\llvert \tilde{\nu}_n\bigl( [0,y] \bigr) - \frac{1}n
\sum_{i = 1}^n \mathbf {1}_{[0,y]}(
R_i) \Biggr\rrvert &\leq&\frac{n_\epsilon}{n} + \frac{1}n
\Biggl\llvert \sum_{i = n_\epsilon} ^n
\mathbf{1}_{[0,y]}( R_ib_n/b_i ) -
\sum_{i =n_\epsilon}^n \mathbf{1}_{[0,y]}(
R_i) \Biggr\rrvert
\\
& \leq&\frac{n_\epsilon}{n} + \frac{1}n \sum_{i = n_\epsilon}^n
\mathbf{1}_{(y/(1+\epsilon),y]}(R_i).
\end{eqnarray*}
Letting $n\to\infty$ gives
\begin{eqnarray*}
\lim_{n\to\infty} \Biggl\llvert \tilde{\nu}_n\bigl( [0,y]
\bigr) - \frac{1}n \sum_{i = 1}^n
\mathbf{1}_{[0,y]}( R_i) \Biggr\rrvert \leq\nu\bigl((y/(1+
\epsilon ),y]\bigr), \qquad \mbox{a.s.}
\end{eqnarray*}
Since $\epsilon$ was arbitrary and $\nu(\{y\}) = 0$, it follows from
the law of large numbers that $\tilde{\nu}_n$ converges a.s. weakly
to $\nu$, as $n\to\infty$. Similarly, one can see that the sequence
$(\nu_n)_{n\in\N}$ has a.s. the same limit as $(\tilde{\nu
}_n)_{n\in\N}$, as $n\to\infty$.
\end{pf*}
\begin{pf*}{Proof of Theorem~\ref{multi_dim}}
Let $u_n(z)=b_n + z/b_n$ for $z\in\R$, $u_n(\mathbf
{x})=(u_n(x_1),\dots,u_n(x_d))^\top$ for $\mathbf{x}\in\R^d$ and
for $\mathbf{x},\mathbf{y}\in\R^d$ write $\mathbf{x} > \mathbf
{y}$ if $x_i>y_i$ for all $1\leq i\leq d$.

Let $\mathbf{x} = (x_1,\dots,x_d)^\top\in\R^d$ be a fixed vector
and $A_{i,n}^l =  \{ X^{(l)}_{i,n} \leq u_n(x_l) \}$ for
$n\in\N, 1\leq i\leq n$ and $1\leq l\leq d$.
%
\begin{eqnarray}\label{log_expan}
&&\log\P \Bigl( \max_{i=1,\dots,n} X^{(1)}_{i,n}
\leq u_n(x_1),\dots, \max_{i=1,\dots,n}
X^{(d)}_{i,n} \leq u_n(x_d) \Bigr)
\nonumber\\[-8pt]\\[-8pt]
&&\quad  = \sum_{i=1}^n \log\P
\Biggl[ \bigcap_{l=1}^d
A_{i,n}^l \Biggr] = -\sum_{i=1}^n
\P \Biggl[ \bigcup_{l=1}^d
\bigl(A_{i,n}^l \bigr)^C \Biggr] +
R_n,\nonumber
\end{eqnarray}
where $R_n$ is a remainder term from the Taylor expansion of $\log$.
Using the same arguments as for the remainder term in \eqref{Rn}, we
conclude that $R_n$ converges to zero as $n\to\infty$. By the
additivity formula we have
%
\begin{eqnarray}
\label{add_formula} -\P \Biggl[ \bigcup_{l=1}^d
\bigl(A_{i,n}^l \bigr)^C \Biggr] = \sum
_{l=1}^d (-1)^l \sum
_{ m: 1\leq m_1 < \cdots< m_l\leq d } \P \Biggl[ \bigcap_{k=1}^l
\bigl(A_{i,n}^{m_k} \bigr)^C \Biggr].
\end{eqnarray}
Consequently, by \eqref{log_expan} and \eqref{add_formula} it
suffices to show that
%
\begin{eqnarray}
\label{suff_conv} \lim_{n\to\infty} \sum_{i=1}^n
\P \bigl( \mathbf{X}_{i,n} > u_n(\mathbf{x}) \bigr) = \int
_{[0,\infty)^{d\times d}} h_{d,(1,\ldots,d),\Lambda}(x_1,\dots
,x_d) \eta(\mathrm{d}\Lambda).
\end{eqnarray}
Let $\mathbf{Z} = (Z_1,\ldots,Z_d)$ be a standard normal random
vector with independent margins and let
$K=\{1,\ldots,d-1\}$. For a vector $\mathbf{x}\in\R^d$ let $\mathbf
{x}_K = (x_1,\dots,x_{d-1})$. If
$A =  (a_{j,k} )_{1\leq j,k \leq d}\in\R^{d\times d}$ is a
matrix, let
$A_{d,K} = (a_{d,1},\dots, a_{d,d-1})$, $A_{K,d} = (a_{1,d},\dots,
a_{d-1,d})$ and $A_{K,K} = (a_{j,k})_{j,k\in K}$.

We first assume that all $\mathbf{X}_{i,n}$ are non-degenerate, that
is, $\eta_n^2(D_0) = 1$, for all $n\in\N$. Then,
similarly as in the proof of Theorem~1.1 in Hashorva \textit{et al.}
\cite{has2012a}, we define a new matrix $B_{i,n}\in\R^{(d-1)\times
(d-1)}$ by
%
\begin{eqnarray}
\label{Bin} B_{i,n}B_{i,n}^\top= (
\Sigma_{i,n})_{K,K} - \boldsymbol{\sigma }_{i,n}
\boldsymbol{\sigma}_{i,n}^\top,\qquad  \boldsymbol{
\sigma}_{i,n}= (\Sigma _{i,n})_{K,d},
\end{eqnarray}
which is well-defined since $(\Sigma_{i,n})_{K,K} - \boldsymbol{\sigma
}_{i,n}\boldsymbol{\sigma}_{i,n}^\top$ is positive definite as the Schur
complement of $(\Sigma_{i,n})_{d,d}$ in the positive definite matrix
$\Sigma_{i,n}$. This enables us to write the vector $\mathbf
{X}_{i,n}$ as the joint stochastic representation
\begin{eqnarray*}
\bigl(X^{(1)}_{i,n},\dots,X^{(d-1)}_{i,n}
\bigr) \stackrel{d} {=} B_{i,n} \mathbf{Z}_K +
Z_d \boldsymbol{\sigma}_{i,n}, \qquad X^{(d)}_{i,n}
\stackrel{d} {=} Z_d.
\end{eqnarray*}
Therefore, since $Z_d$ is independent of $\mathbf{Z}_K$,
%
\begin{eqnarray}\label{single_summand}
\P \bigl( \mathbf{X}_{i,n} > u_n(\mathbf{x})
\bigr) &=& \P \bigl( B_{i,n} \mathbf{Z}_K + Z_d
\boldsymbol{\sigma}_{i,n} > u_n(\mathbf{x}_K),
Z_d > u_n(x_d) \bigr)
\nonumber\\
& =& \int_{x_d}^\infty\P \bigl(
B_{i,n}\mathbf{Z}_K + u_n(s)\boldsymbol{
\sigma}_{i,n} > u_n(\mathbf{x}_K) \bigr)
b_n^{-1} \phi(b_n)\mathrm{e}^{- s - s^2/(2b_n^2) } \,\mathrm{d}s
\nonumber\\
& =& \frac{1}n \int_{x_d}^\infty S
\bigl( \bigl(b_n^2\bigl(\mathbf {11}^\top-
\Sigma_{i,n}\bigr) \bigr)_{K,d} + x_K - s
\mathbf{1} \\
&&\hphantom{\frac{1}n \int_{x_d}^\infty S
\bigl( }{}+ sb_n^{-2} \bigl(b_n^2
\bigl(\mathbf{11}^\top-\Sigma_{i,n}\bigr)
\bigr)_{K,d} | b_n^2B_{i,n}B^\top_{i,n}
\bigr)
\nonumber\\
&&\hphantom{\frac{1}n \int_{x_d}^\infty  }{}\times \mathrm{e}^{- s - s^2/(2b_n^2) } \,\mathrm{d}s.\nonumber
\end{eqnarray}
It follows from the definition of $B_{i,n}$ in equation \eqref{Bin} that
\begin{eqnarray*}
B_{i,n}B^\top_{i,n} &=& \bigl(\mathbf{11}^\top-
\Sigma _{i,n}\bigr)_{K,d}\mathbf{1}^\top+
\mathbf{1}\bigl(\mathbf{11}^\top- \Sigma _{i,n}
\bigr)_{d,K} - \bigl(\mathbf{11}^\top- \Sigma_{i,n}
\bigr)_{K,K}
\\
&&{} - \bigl(\mathbf{11}^\top- \Sigma_{i,n}
\bigr)_{K,d}\bigl(\mathbf{11}^\top- \Sigma_{i,n}
\bigr)_{d,K}.
\end{eqnarray*}
Together with \eqref{single_summand} and the definition of $\eta_n$
this yields
\begin{eqnarray*}
\sum_{i=1}^n \P \bigl(
\mathbf{X}_{i,n} > u_n(\mathbf{x}) \bigr) = \int
_{D_0} p_n(A) \eta^2_n(\mathrm{d}A),
\end{eqnarray*}
where $p_n$ is a measurable function from $D_0$ to $[0,\infty)$ given by
\begin{eqnarray*}
p_n(A) &=& \int_{x_d}^\infty S \bigl(
2A_{K,d} + x_K - s\mathbf{1} + 2b_n^{-2}sA_{K,d}
| \Gamma_{d,(1,\dots,d)} (\sqrt{A}) - 4b_n^{-2}A_{K,d}A_{d,K}
\bigr)
\\
&&\hphantom{\int_{x_d}^\infty}{}\times \mathrm{e}^{- s - s^2/(2b_n^2)} \,\mathrm{d}s.
\end{eqnarray*}
Further, let $p$ be the measurable function from $D_0$ to $[0,\infty)$
\begin{eqnarray*}
p(A) = \int_{x_d}^\infty S \bigl( 2A_{K,d}
+ x_K - s\mathbf{1} | \Gamma_{d,(1,\dots,d)} (\sqrt{A})
\bigr)\mathrm{e}^{- s} \,\mathrm{d}s.
\end{eqnarray*}
Note that $\eta_n\Rightarrow\eta$ if and only if $\eta
^2_n\Rightarrow\eta^2$. In view of \eqref{suff_conv} it suffices to
show that
%
\begin{eqnarray}
\label{int_conv} \lim_{n\to\infty} \int_{D_0}
p_n(A) \eta^2_n(\mathrm{d}A) = \int
_{D_0} p(A) \eta^2(\mathrm{d}A).
\end{eqnarray}
To this end, let $A_0\in D_0$ and $\{A_n, n\in\N\}$ be a sequence in
$D_0$ that converges to $A_0$. We will show that $p_n(A_n) \to p(A_0)$
as $n\to\infty$. By dominated convergence, it is sufficient to show
the convergence of the survivor functions. Since $A_0$ is in $D_0$,
recall that $\Gamma_{d,(1,\dots,d)} (\sqrt{A_0})$ is in the space
$\mathcal{M}_{(d-1)}$ of $(d-1)$-dimensional, non-degenerate
covariance matrices. Moreover, since $\mathcal{M}_{(d-1)}\subset\R
^{(d-1)\times(d-1)}$ is open and $\Gamma_{d,(1,\dots,d)} (\sqrt {A_n}) - b_n^{-2}4(A_n)_{K,d}(A_n)_{d,K}$ converges to $\Gamma
_{d,(1,\dots,d)} (\sqrt{A_0})$, there is an $n_0\in\N$ such that
for all $n\geq n_0$ we have $\Gamma_{d,(1,\dots,d)} (\sqrt{A_n}) -
b_n^{-2}4(A_n)_{K,d}(A_n)_{d,K}\in\mathcal{M}_{(d-1)}$. Since also
$2(A_n)_{K,d} + x_K - s\mathbf{1} + b_n^{-2}s2(A_n)_{K,d}$ converges
to $2(A_0)_{K,d} + x_K - s\mathbf{1}$ as $n\to\infty$, we conclude
that the survivor functions converge and consequently $p_n(A_n) \to
p(A_0)$. Applying Theorem~5.5 in Billingsley \cite{bil1968}
yields \eqref{int_conv}.

If not all random vectors $\mathbf{X}_{i,n}$ are non-degenerate, then
it follows from the weak convergence $\eta^2_n\Rightarrow\eta^2$
that $\eta_n^2(D\setminus D_0) \to\eta^2(D\setminus D_0) = 0$, as
$n\to\infty$. Indeed, since $D\setminus D_0$ is closed in $D$, we
have that $\eta^2(\partial(D\setminus D_0)) = 0$. Thus, the
degenerate random vectors in \eqref{suff_conv} are negligible. This
concludes the proof.
\end{pf*}
\begin{pf*}{Proof of Proposition~\ref{prop1}}
Let $t_1,\dots,t_m\in\R^d$ and $x_1,\dots,x_m\in\R$ be fixed. It
follows from
formula (19) in Kabluchko \cite{kab2011} that for a fixed
variogram $\gamma_0\in V_d$,
the finite dimensional distribution $(\xi(t_1),\dots,\xi(t_m))$ of the
corresponding Brown--Resnick process in \eqref{BRproc} is given by
$H_{\Lambda_{\gamma_0}}$ with
$\Lambda_{\gamma_0} = (\sqrt{\gamma_0(t_j-t_k)}/4)_{1\leq j,k \leq
m}$.

For the max-mixture of Brown--Resnick processes w.r.t. the mixture measure
$\mathbb{Q}$, we obtain via void probabilities of Poisson point processes
\begin{eqnarray*}
-\log\P \bigl(\xi_{\mathbb{Q}}(t_1)\leq x_1, \dots,
\xi_{\mathbb
{Q}}(t_m)\leq x_m \bigr) &=& \int
_{\mathbb{R}} \mathrm{e}^{-u} \P \Bigl( u > \min
_{i=1,\dots,m} x_i - W_\gamma(t_i) +
2\gamma(t_i) \Bigr)
\\
&=& \mathbb E \max_{i=1,\dots,m} \exp \bigl(W_\gamma(t_i)
- 2\gamma (t_i) - x_i \bigr),
\end{eqnarray*}
where $\gamma$ has distribution $\mathbb Q$ and the process $W_\gamma
$, conditional on $\gamma$,
is a zero-mean Gaussian process with stationary increments, variogram
$4\gamma$
and $W_\gamma(0) = 0$ a.s. By conditioning on the variogram we get
%
\begin{eqnarray}\label{mixing}
&&-\log\P \bigl(\xi_{\mathbb{Q}}(t_1)\leq
x_1, \dots,\xi _{\mathbb{Q}}(t_m)\leq
x_m \bigr)
\nonumber\\
&&\quad  = \int_{V_d} \mathbb E \max_{i=1,\dots,m}
\exp \bigl(W_{\gamma_0}(t_i) - 2\gamma_0(t_i)
- x_i \bigr) \mathbb{Q}(\mathrm{d}\gamma _0)
\\
&&\quad  = \int_{V_d} \mathbb-\log
H_{\Lambda_{\gamma
_0}}(x_1,\dots,x_m) \mathbb{Q}(\mathrm{d}
\gamma_0).\nonumber
\end{eqnarray}
Thus, comparing with \eqref{fdd}, the finite dimensional distributions
of $\xi_{\mathbb{Q}}$ are given
by the max-mixtures of Brown--Resnick processes w.r.t. the mixture
measure $\mathbb Q$.
Consequently, $\xi_{\mathbb{Q}}$ is max-stable and by \eqref{mixing},
stationarity is preserved under max-mixing.
\end{pf*}

\section*{Acknowledgements}
The authors are grateful to Kirstin Strokorb for helpful discussions on
extremal correlation functions and
to two unknown referees who helped to considerably improve the paper.
S. Engelke has been financially supported by
Deutsche Telekom Stiftung and the Swiss National Science Foundation project
200021-134785. M. Schlather
has been financially supported by Volkswagen Stiftung within the
project `WEX-MOP'.




\printhistory

\end{document}